\documentclass[reqno]{amsart}
\usepackage{amsmath, amsfonts, amsthm, amssymb, multicol,mathrsfs}
\usepackage{graphicx}
\usepackage[margin=1.4in, centering]{geometry}
\usepackage{float}
\usepackage{xcolor}
\usepackage{verbatim}
\usepackage[english]{babel}
\usepackage{comment}
\usepackage{hyperref}
\hypersetup{colorlinks}
\hypersetup{
    colorlinks=true,
    linkcolor=blue,
    filecolor=magenta,      
    urlcolor=cyan,
    citecolor=blue,
}

\usepackage{tikz-cd}
\usepackage{tikz-3dplot}
\numberwithin{equation}{section}

\newtheorem{thm}{Theorem}[section]
\newtheorem{lma}[thm]{Lemma}
\newtheorem{cor}[thm]{Corollary}
\newtheorem{defn}[thm]{Definition}

\newtheorem{example}{Example}
\newtheorem{prop}[thm]{Proposition}

\newtheorem{ques}[thm]{Question}

\renewcommand{\epsilon}{\varepsilon}

\newcommand{\R}{\mathbb{R}}

\renewcommand{\ge}{\geqslant}

\renewcommand{\geq}{\geqslant}
\renewcommand{\leq}{\leqslant}

\newcommand{\hdim}{\dim_{\mathrm{H}}  }

\DeclareMathOperator{\Vol}{Vol}

\newcommand{\esssup}{\mathop{\operatornamewithlimits{ess-sup}}}
\DeclareMathOperator{\aff}{aff}
\DeclareMathOperator{\dist}{dist}
\DeclareMathOperator{\G}{\mathcal{G}}
\DeclareMathOperator{\A}{\mathcal{A}}

\DeclareMathOperator{\Proj}{Proj}

\author{Jos\'{e} Gaitan Montejo}
\address[J. Gaitan]{Department of Mathematics, Virginia Tech, Blacksburg, VA}
\email{jogaitan@vt.edu}

\author{Eyvindur Ari Palsson}
\address[E. Palsson]{Department of Mathematics, Virginia Tech, Blacksburg, VA}
\email{palsson@vt.edu}

\title{On volumes of simplices in intermediate dimensions}

\begin{document}

\begin{abstract}
A variant of the Falconer distance problem asks for fixed $k\geq 1$ and $d\geq k+1$, how large does the Hausdorff dimension of a Borel set $E\subset\mathbb{R}^d$ need to be to guarantee that there exist $x_0,\ldots,x_{k}\in E$ such that $\Vol_{k+1}^{(x_0,\ldots,x_{k})}(E) = \lbrace \Vol_{k+1}(x_0,\ldots,x_{k},x_{k+1}) : x_{k+1}\in E \rbrace$ has positive Lebesgue measure. Here $\Vol_{k+1}(x_0,\ldots,x_{k},x_{k+1})$ denotes the $k+1$-volume of the $k+1$ simplex formed by $x_0,\ldots,x_{k},x_{k+1}$. Recently, Shmerkin and Yavicoli established a sharp dimensional threshold $k$ in the case when $d=k+1$. In this paper we extend their result to $k+1 \leq d \leq 2k$ and obtain a non-trivial dimensional threshold $d-k$ when $d>2k$.

The result is motivated by ideas from Shmerkin and Yavicoli. A crucial part of the argument is an application of work by Bright, Ortiz and Zakharov on a continuum Beck-type theorem for hyperplanes as well as classic results of Marstrand on projections and slicing theorems. In addition, we investigate a more elementary approach under a condition called the Fubini property for Hausdorff dimension as introduced in the work of H\'{e}ra, Keleti and M\'{a}th\'{e}.
\end{abstract}

\maketitle

\section{Introduction}

\vskip.125in 

The Falconer distance problem, a continuous analogue of the celebrated Erd\H{o}s distance problem, asks how large the Hausdorff dimension of a Borel set $E\subset\mathbb{R}^d$, $d\geq 2$, needs to be to guarantee that the Lebesgue measure of the distance set $$\Delta(E) := \lbrace |x-y| : x,y\in E\rbrace$$ is positive. Falconer \cite{Falc1985} showed that the threshold $\frac{d}{2}$ was necessary and conjectured it was also sufficient. Much progress has been made on this problem in the last decade with the current best known thresholds being $\frac{5}{4}$ when $d=2$ and $\frac{d}{2}+\frac{1}{4}-\frac{1}{8d+4}$ when $d\geq 3$ \cite{GIOW20,DORZ}. These papers in fact show a stronger result that with the same thresholds there exists $x\in E$ such that the pinned distance set $$\Delta^{x}(E) := \lbrace |x-y| : y\in E\rbrace$$ has positive Lebesgue measure.

Replacing distance with another geometric quantity gives rise to many variants of the Falconer distance problem. In this paper we consider the case of volumes of simplices. For $k\geq 1$, points $x_0, \ldots, x_{k+1}$ in $\mathbb{R}^d$, $d\geq k+1$, form a $(k+1)$-simplex and we denote the $(k+1)$-volume of that simplex by $\Vol_{k+1}(x_0,\ldots,x_{k+1})$. A natural analogue of the Falconer distance problem then asks how large the Hausdorff dimension of a Borel set $E\subset\mathbb{R}^d$, $d\geq k+1$, needs to be to guarantee that the Lebesgue measure of the volume set 
$$\Vol_{k+1}(E) := \lbrace \Vol_{k+1}(x_0,\ldots,x_{k+1}) : x_0,\ldots,x_{k+1}\in E \rbrace$$
is positive. The restriction $d \geq k+1$ arises naturally, for otherwise all simplices are degenerate and the only volume that shows up is $0$, so the question becomes trivial.

The first result in this direction was established by Erdo\u{g}an, Hart and Iosevich \cite{EHI13}, who obtained the threshold $\frac{d+1}{2}$ for areas of triangles in $\mathbb{R}^d$ pinned at the origin. Greenleaf, Iosevich and Mourgoglou \cite{GIM15} extended that result to volumes of tetrahedra pinned at the origin in $\mathbb{R}^3$ with the dimensional threshold $\frac{8}{3}$. A case of particular interest is when $d=k+1$ and in this setting Grafakos, Greenleaf, Iosevich and the second listed author of this paper obtained the dimensional thresholds $d-1+\frac{1}{2d}$ if $d$ is even and $d-1+\frac{1}{2(d-1)}$ if $d$ is odd, for the $d$-simplex in $\mathbb{R}^d$ \cite{GGIP}. It is easy to see by considering a hyperplane, that the threshold $d-1$ is necessary and in \cite{GGIP} it was conjectured this was also sufficient. A number of extensions have followed. Greenleaf, Iosevich and Taylor obtained non-empty interior for areas of triangles at a threshold $\frac{5}{3}$ and a strongly pinned non-empty interior result at a dimensional threshold $d-1+\frac{1}{d}$ when $k+1=d\geq 3$ \cite{GIT22}. McDonald \cite{McDonald21} as well as Galo and McDonald \cite{GMD22} extended these results respectively to area-types and volume-types, which are generalizations that makes sense of the question when $d<k+1$. In related work to the area-types, Greenleaf, Iosevich and Taylor obtained non-empty interior results for tuples of areas \cite{GIT24} and even extended their results to trees of areas recently \cite{GIT25}.

In the much studied case, when $d=k+1$, where the ambient space matches the size of the simplex, very recently Shmerkin and Yavicoli \cite{SY25} confirmed the conjecture in \cite{GGIP} and obtained a very strong pinned result at threshold $d-1$. More precisely, they showed for a Borel set $E\subset\mathbb{R}^d$ with $\hdim(E)>d-1$ then there exist pins $x_0,\ldots,x_{d-1}\in E$ such that
$$\Vol_{d}^{(x_0,\ldots,x_{d-1})}(E):=\lbrace \Vol_{d}(x_0,\ldots,x_d) : x_d\in E \rbrace$$
has positive Lebesgue measure. In other words, when $d=k+1$ then if $\hdim(E)>k$ there exist pins $x_0,\ldots,x_{k}\in E$ such that $\Vol_{k+1}^{(x_0,\ldots,x_{k})}(E)$ has positive Lebesgue measure. In the same way that the threshold $d-1=k$ was necessary in this setting, as seen by considering a hyperplane, when one considers higher dimensions $d>k+1$ the threshold $k$ is still necessary. Through simple slicing arguments, one can extend the arguments of Shmerkin and Yavicoli to a dimensional threshold $d-1$ in the higher dimensional case, but the question remains\footnote{The statement of Theorem 1.1 in \cite{SY25} claims the dimensional threshold $k$ in any dimension $d\geq k+1$, however as confirmed through personal communication with the authors the proof does not yield that. The problem is that the proof finds lots of distances to a certain hyperplane in which the base of the simplex lies, but when there is a mismatch between the dimensionality of the base and the hyperplane then there could possibly be many distances to the hyperplane, while the distances to the base could be very few.} whether that threshold can be lowered down to $k$.

Our main theorem establishes the dimensional threshold $\hdim(E)>k$ to guarantee that there exist pins $x_0,\ldots,x_{k}\in E$ such that $\Vol_{k+1}^{(x_0,\ldots,x_{k})}(E)$ has positive Lebesgue measure when $k+1\leq d \leq 2k$. The case $d=k+1$ was established by Shmerkin and Yavicoli \cite{SY25}, but in the range $k+1 < d \leq 2k$, when $k>1$, the results are new and sharp. We also obtain a non-trivial threshold $d-k$ in higher dimensions, which improves, when $k>1$, on the $d-1$ threshold that can be obtained from iteratively applying the result from \cite{SY25}.

\begin{thm}\label{main}
  Fix $k\geq 1$ and then $d\geq k+1$. Let $E \subseteq \R^d$ be a Borel set  with dimension $\hdim(E)>\max\{k,d-k\}$. Then there exist $x_{0},\ldots,x_{k}\in E$ such that the set
  \[
    \Vol_{k+1}^{(x_0,\ldots,x_{k})}(E) :=\left\{ \Vol_{k+1}(x_0,\ldots,x_k,y): y\in E \right\}
  \]
has positive Lebesgue measure. Note that the condition reduces to $\hdim(E) >k$ if $d\leq 2k$.
\end{thm}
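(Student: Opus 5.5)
Write $V=\operatorname{span}\{x_1-x_0,\ldots,x_k-x_0\}$ and $A=x_0+V$. For pins in general position, $\Vol_{k+1}(x_0,\ldots,x_k,y)$ equals $c\,\dist(y,A)$ with $c>0$ the $k$-volume of the base. So it suffices to find pins in $E$, spanning a genuine $k$-plane $A$, such that the pinned distance-to-plane set $\{\dist(y,A):y\in E\}$ has positive Lebesgue measure.

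This distance factors as $y\mapsto P_{V^\perp}(y-x_0)\mapsto |P_{V^\perp}(y-x_0)|$. Here $P_{V^\perp}$ is orthogonal projection onto the $(d-k)$-dimensional complement, and the second map is a pinned Euclidean distance in $V^\perp\cong\R^{d-k}$. So the plan has two stages.

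\textbf{Stage 1 (projection).} Show that $P_{V^\perp}(E)$ is large for a plane $V$ spanned by points of $E$, large enough to force positive measure of the pinned distance set from the origin.

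\textbf{Stage 2 (radial step).} Pass from a large projection to a positive-measure set of lengths. For the radial map $z\mapsto|z|$ on $\R^{d-k}$, positive measure of the image already follows from $P_{V^\perp}(E)$ having positive $(d-k)$-measure. More cheaply, it follows if the projection has Hausdorff dimension greater than $d-k-1$ and we can also control spherical slices. The cleanest target is $\mathcal{L}^{d-k}(P_{V^\perp}(E))>0$, which Marstrand--Mattila gives for almost every $V\in G(d,k)$ whenever $\hdim E>d-k$. This is exactly where the hypothesis $\hdim E>d-k$ enters.

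\textbf{Stage 3 (realizing a good plane by pins).} The obstacle is that "almost every $V$" must be realized by directions $x_i-x_0$ with all $x_i\in E$. Here we would use the continuum Beck-type theorem of Bright--Ortiz--Zakharov (quoted in the abstract). Its content is that if $\hdim E>k$ and $E$ is not concentrated on a $k$-plane, then the set of $k$-planes spanned by $(k+1)$-tuples of $E$ has positive measure, indeed positive $\mu^{k+1}$-measure, in the affine Grassmannian. Concretely:
\begin{enumerate}
\item Take a Frostman measure $\mu$ on $E$ of exponent $s>\max\{k,d-k\}$. Since $s>k$, $\mu$ gives zero mass to every $k$-plane. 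Therefore $\mu^{k+1}$-almost every tuple spans a nondegenerate $k$-plane.
\item Push $\mu^{k+1}$ forward to $G(d,k)$ via $(x_0,\ldots,x_k)\mapsto V$. The Beck-type theorem, or a direct energy estimate, shows this pushforward is absolutely continuous, or at least not supported on the Marstrand exceptional set. That exceptional set has dimension below $\dim G(d,k)$, specifically at most $k(d-k)-(s-(d-k))$.
\item For a positive-measure family of good $V$, project $\mu$ to $V^\perp$ to obtain an $L^2$ density. Then the pin $x_0$ can be chosen $\mu$-generically, so that the distance map from $P_{V^\perp}x_0$ has positive-measure image. Fubini over $x_0$, with $V$ fixed by the remaining pins, handles this.
\end{enumerate}

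The subtle point in step 3 is that $x_0$ and $V$ are coupled. To decouple them, we would fix $x_1,\ldots,x_k$ and vary $x_0$, or use translation invariance, since $V$ depends only on the differences $x_i-x_0$.

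\textbf{Main difficulty.} We expect the hardest step to be item 2: showing that the spanned planes avoid the exceptional set of the projection theorem. The abstract's mention of Marstrand's slicing theorems suggests the fallback. Slice $E$ by $(d-k)$-planes parallel to $V^\perp$ to reduce dimension, and apply the Beck-type theorem inside the slices. If the estimates do not close directly, we would replace "a.e.\ $V$" with a quantitative exceptional-set bound of Kaufman/Mattila type and compare it against the dimension of the pushforward measure on $G(d,k)$.
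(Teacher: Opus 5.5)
Your Stages 1--2 match the paper's reduction. It suffices to find pins spanning $W=x_0+V$ with $\mathcal{L}^{d-k}(P_{V^\perp}E)>0$. Once you have that, the choice of $x_0$ is irrelevant: translates of a positive-measure set have positive measure, and the radial image of such a set from any center has positive $\mathcal{L}^1$ measure. So the coupling issue in your item 3 is not a real issue.

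The gap is in Stage 3, and your item 2 fails as stated. The Bright--Ortiz--Zakharov theorem does not say what you attribute to it.
\begin{itemize}
\item It needs an \emph{irreducible} Frostman measure, i.e.\ $\mu(V')=0$ for every proper affine subspace $V'\subset\R^d$. Vanishing on $k$-planes, which is all that $s>k$ gives you, is not enough.
\item Its conclusion is only the dimension bound $\hdim\mathcal P^k(E)\ge (k+1)\min\{s,d-k\}$. It gives neither positive Haar measure of the spanned planes nor absolute continuity of the pushforward of $\mu^{k+1}$.
\end{itemize}
Without irreducibility the claim is false. Take $d=4$, $k=2$ and $E=[0,1]^2\times C\times\{0\}$ with $C$ a Cantor set, so $\hdim E>2=\max\{k,d-k\}$. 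Every spanned plane has direction $V\subset\R^3\times\{0\}$. Hence $e_4\in V^\perp$, and $P_{V^\perp}(E)$ lies in the line $V^\perp\cap(\R^3\times\{0\})$. So $\mathcal{L}^2(P_{V^\perp}E)=0$ for \emph{every} spanned $V$: the entire pushforward sits on the exceptional set. Moreover, the spanned planes form a set of dimension at most $3$ in the $6$-dimensional $\mathcal{A}(4,2)$. The theorem still holds for this $E$, but only by measuring distances inside $\R^3$. Your Stage 1 target in $\R^d$ is unattainable in general.

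The missing idea is the reduction to the irreducible dimension.
\begin{itemize}
\item Choose an affine subspace $V'$ of minimal dimension $d_\mu$ with $\mu(V')>0$. By minimality $\mu|_{V'}$ is irreducible in $V'$; moreover $d_\mu\ge k+1$ because $s>k$, and $s>d-k\ge d_\mu-k$.
\item Work in $V'\cong\R^{d_\mu}$ with $E'=E\cap V'$, and project onto $(d_\mu-k)$-dimensional complements inside $V'$.
\end{itemize}
Item 2 then closes by pure dimension counting, which is what your closing fallback gestures at; it just has to be run in $V'$ rather than in $\R^d$.
\begin{itemize}
\item BOZ gives $\hdim\mathcal P^k(E')\ge (k+1)(d_\mu-k)$.
\item Each class of parallel planes is parametrized by $W_0^\perp$, which has dimension $d_\mu-k$. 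So the set of directions of spanned planes in $G(d_\mu,k)$ has dimension at least $k(d_\mu-k)$.
\item Falconer's exceptional set $\Theta\subset G(d_\mu,d_\mu-k)$ satisfies $\hdim\Theta\le (k+1)(d_\mu-k)-s<k(d_\mu-k)$, precisely because $s>d_\mu-k$.
\item The map $W_0\mapsto W_0^\perp$ is an isometry.
\end{itemize}
Hence some spanned direction is non-exceptional, which is all that is needed. No absolute continuity of the pushforward is required.
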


 We would like to remark that $\hdim(E)>\max \{k,d-k\}$ is required in our method, the reasons behind it are roughly described below. The condition  $\hdim(E) = s>k$ is clear, otherwise, $E$ could be entirely contained on a single $k$-plane. The condition $\hdim(E) > s>d-k$ is demanded by Theorem \ref{mar2}, because we need positive dimension of slicing intersections of the following form $$\hdim (E\cap (H+u)) = s-(d-k)>0 \ \text{ for } \gamma_{d,k} \text{ a.e. }  H\in \G(d,k)$$ and some positive Lebesgue measure set of translations $u\in H^{\perp}$. Therefore, $\hdim(E)>d-k$ cannot be removed and the dimensional threshold gets larger than perhaps anticipated when $d > 2k$.  
\subsection{Affine $k$-planes spanned by points from a Borel set.} One of the key steps in the argument of \cite[Theorem 1.1] {SY25} is to demonstrate the existence of many affinely independent $k$-planes spanned by $k+1$ points from $E$. They achieve this in the case $k+1=d$ by requiring $\hdim(E)>d-1$ and using Marstrand's slicing theorem (Theorem \ref{mar2} equation (\ref{mareq3})) to control the dimension of the the intersections of $E$ with affine hyperplanes. They found that in almost every slice it is possible to find at least one collection of affinely independent points $x_0,\dots, x_{d-1}\in E\cap H$, and with abundant complement $E\cap H^{\perp}$. This idea suggests that a natural approach to obtain the result for volumes of $k$-simplices is, therefore, to slice $E$ with lower dimensional affine $k$-planes and hope to simultaneously have abundant intersections and complements. When $d>k+1$, then iterating the strategy from \cite{SY25} does not yield good quantitative bounds on the abundance of affine $k$-planes spanned by $E$.

Recently, a related question was addressed by Bright, Ortiz and Zakharov \cite{BOZ}. Let $\mathcal P^k(E)$ denote the set of $k$-planes spanned by $k+1$ affinely independent points of $E$. The authors obtained positive and optimal results about the dimension of $\mathcal P^k(E)$ under \emph{non-concentration} assumptions on the Borel set $E$. Their main goal was a continuum Beck-type theorem for hyperplanes, so the case of $k=d-1$ was of special interest to them. One of the non-concentration assumptions they considered is to require $E$ to admit an \emph{irreducible} $s$-$Frostman$ measure in the following sense.
\begin{defn}\cite[Definition 1.5]{BOZ}\label{defn: irreducibility of a measure}.
    A measure $\mu$ supported on an affine subspace $V\subset \R^d$ is \textbf{irreducible} in $V$ if $\mu(V') = 0$ for any proper affine subspace $V'\subset V$.
\end{defn}
By iteratively identifying many \emph{irreducible} pieces, $\mu_i$, of the measure $\mu$ supported on $E$, they obtained the following result.
\begin{thm}\label{thm P^k planes BOM}\cite[Theorem 4.1]{BOZ}
    Let $E\subset \R^d$ be a Borel set supporting an irreducible $s$-Frostman probability measure $\mu$, then for all $1 \leq m \leq d-1$,
    \begin{equation}\label{BOZ k-affine planes}
 \hdim \mathcal P^{m}(E) \ge (m+1)\min\{s,d-m\}.
    \end{equation}
\end{thm}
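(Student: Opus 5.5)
The plan is to realize $\mathcal P^m(E)$ as the image of the $(m+1)$-fold product measure under the ``span'' map, and then to bound the dimension of the push-forward by comparing it locally with a product of orthogonal projections of $\mu$. Let
\[
\Phi(x_0,\ldots,x_m)=\aff(x_0,\ldots,x_m)\in \A(d,m),
\]
defined on affinely independent tuples, and set $\nu=\Phi_*(\mu^{m+1})$. Since $\nu$ is supported on $\mathcal P^m(E)$, it suffices to show that $\nu$, restricted to a set of positive measure, has finite $t$-energy for every $t<(m+1)\min\{s,d-m\}$.

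\textbf{Step 1: $\Phi$ is defined $\mu^{m+1}$-almost everywhere.} Here irreducibility enters. Fix $j\le m\le d-1$ and points $x_0,\ldots,x_{j-1}$. Their affine span is a proper affine subspace of $\R^d$, so it has $\mu$-measure zero. By Fubini and induction on $j$, $\mu^{m+1}$-almost every tuple is affinely independent. A compactness argument then gives constants $c,\delta>0$ and a set $G\subset E^{m+1}$ with $\mu^{m+1}(G)>0$ on which every tuple is $c$-quantitatively non-degenerate, meaning the simplex has $m$-volume at least $c$. We may also arrange that all tuples in $G$ span planes whose directions lie within $\delta$ of a fixed $V_0\in\G(d,m)$.

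\textbf{Step 2: choose the direction $V_0$ generically.} By Marstrand/Kaufman's projection theorem (Theorem \ref{mar2} type results), for $\gamma_{d,m}$-a.e.\ $V\in\G(d,m)$ the projection $\pi_{V^\perp}\mu$ has $I_t(\pi_{V^\perp}\mu)<\infty$ for all $t<\min\{s,d-m\}$. The directions of $\Phi$ near a non-degenerate tuple fill an open set in $\G(d,m)$, because we may perturb one point along directions charged by $\mu$; irreducibility prevents $\mu$ from living in a lower-dimensional set of directions. So in Step 1 we can take $V_0$ to be such a generic direction.

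\textbf{Step 3: local coordinates.} Near $V_0$, parametrize $m$-planes as graphs of affine maps $V_0\to V_0^\perp$. Fixing reference points $p_0,\ldots,p_m\in V_0$ in general position, such a plane is determined by its values $w_0,\ldots,w_m\in V_0^\perp\cong\R^{d-m}$ at these points. This gives a bi-Lipschitz chart of dimension $(m+1)(d-m)$.

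Write each $x_i=(a_i,b_i)$ with $a_i=\pi_{V_0}x_i$ and $b_i=\pi_{V_0^\perp}x_i$. The plane through the $x_i$ is then the graph whose values at the $p_j$ are linear combinations of the $b_i$, with coefficients given by the barycentric coordinates of the $p_j$ with respect to $(a_i)$. For fixed $a=(a_i)$ with $c$-non-degenerate simplex, the map $(b_i)\mapsto(w_j)$ is a linear isomorphism with norm and inverse norm bounded in terms of $c$. Hence, fibrewise in $a$, the $t$-energy of $\nu$ is controlled by the $t$-energy of $\bigotimes_i\pi_{V_0^\perp}\mu$. The latter is finite for $t<(m+1)\min\{s,d-m\}$, because the energy of a product is dominated by the product of $t_i$-energies with $\sum t_i=t$.

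\textbf{Main obstacle: coupling across different $a$-fibres.} The step I expect to be hardest is passing from these fibrewise bounds to the full double integral $\iint |\Phi(x)-\Phi(y)|^{-t}\,d\mu^{m+1}(x)\,d\mu^{m+1}(y)$, since two tuples $x,y$ generally have different base parts $a\neq a'$. I would show $|\Phi(x)-\Phi(y)|\gtrsim_c \big|L_a(b)-L_a(b')\big|-C|a-a'|\cdot|b'|$, where $L_a$ is the linear map from Step 3. I would then split the integral according to whether $|L_a(b)-L_a(b')|$ dominates $|a-a'|$ or not:
\begin{itemize}
\item In the first regime the fibrewise estimate applies.
\item In the second regime, the $\mu^{m+1}$-mass of pairs whose $V_0$-components are $r$-close is at most $r^{(m+1)s}$ by the Frostman condition, which compensates the singularity since $t<(m+1)\min\{s,d-m\}\le(m+1)s$.
\end{itemize}
If this direct splitting fails, the fallback is a dyadic pigeonholing in scales to localize all $a_i$ to $r$-balls, repeating the argument at each scale and summing.
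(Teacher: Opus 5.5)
The paper does not prove this statement; it quotes it from \cite[Theorem 4.1]{BOZ}, so your argument has to stand on its own. Your Step 1 and the energy framework (push $\mu^{m+1}$ forward by $\Phi$ and bound $I_t(\nu)$) are sound. There is, however, a genuine gap, located exactly at the step you flagged.

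In your chart, $\Phi(x)=\Phi(y)$ precisely when the two tuples lie on a common $m$-plane $W$. Write $W$ as the graph of an affine map $g\colon V_0\to V_0^\perp$ of slope $O(\delta)$. Then $b_i=g(a_i)$ and $b_i'=g(a_i')$, so $|b-b'|\lesssim\delta|a-a'|$, while $|a-a'|$ is typically of order one. Hence the entire singular set of the kernel $|\Phi(x)-\Phi(y)|^{-t}$ lies in your second regime, far from $\{a=a'\}$. No bound on the mass of pairs with $|a-a'|<r$ can control it, and dyadic pigeonholing in $|a-a'|$ does not change this. That bound is also false as stated: $\{y:|\pi_{V_0}(y-x)|<r\}$ is the $r$-neighbourhood of a $(d-m)$-plane, not an $r$-ball, so the Frostman condition gives nothing like $r^{s}$ per coordinate.

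What must actually be estimated is $\nu(B(\Phi(x),r))$. On your good set $G$ this is essentially $\mu(\Phi(x)^{(r)})^{m+1}$, where $\Phi(x)^{(r)}$ is the $r$-neighbourhood of the plane spanned by $x$. So you need, in an averaged sense, $\mu(W^{(r)})\lesssim r^{\min\{s,d-m\}-\epsilon}$ for $\nu$-typical planes $W$. The Frostman condition alone gives only $\mu(W^{(r)})\lesssim r^{s-m}$, i.e.\ the exponent $(m+1)(s-m)$. These planes pass through points of $\operatorname{supp}\mu$, so the required bound is a statement about radial projections of $\mu$ from points of its own support. For $m=1$ it says that the radial projection $R_x\mu$ onto $S^{d-1}$ has dimension at least $\min\{s,d-1\}$ for $\mu$-a.e.\ $x$, and this is where irreducibility has to do real work. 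Kaufman-type estimates for orthogonal projections in a fixed direction say nothing about it. For $d=2$, $m=1$ the statement is essentially the continuum Beck theorem of Orponen, Shmerkin and Wang, whose proof needed their new radial projection theorem. A radial projection input of that strength (e.g.\ Ren's estimates in $\R^d$) is the missing idea.

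Two further steps are also incorrect as written.

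In Step 2, irreducibility does not force the spanned directions to fill an open subset of $\G(d,m)$. Arc length on the moment curve $t\mapsto(t,t^2,\ldots,t^d)$ is irreducible (a hyperplane meets the curve in at most $d$ points) and $1$-Frostman. Yet the directions of $m$-planes spanned by $(m+1)$-tuples on it form a family of dimension at most $m+1$, which is $\gamma_{d,m}$-null when $m+1<m(d-m)$, e.g.\ $d=4$, $m=2$. So you cannot simply choose $V_0$ both Kaufman-generic and charged by the direction measure. You would need the direction measure to avoid the Kaufman exceptional set, which is again a radial-projection-type fact.

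In Step 3, the fibrewise comparison is wrong. Conditioned on $a$, the variable $b$ is distributed by products of slice measures of $\mu$ on the fibres $\pi_{V_0}^{-1}(a_i)$, not by $\bigotimes_i\pi_{V_0^\perp}\mu$. What is correct is the unconditional bound $\iint|b-b'|^{-t}\,d\mu^{m+1}\,d\mu^{m+1}\le\prod_i I_{t/(m+1)}(\pi_{V_0^\perp}\mu)$ in your first regime, but that regime is the easy part.
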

As stated, this result is contingent on the existence of an irreducible $s$-$Frostman$ measure supported on $E$. However, there exists a simple argument that allows us to eliminate the irreducibility condition on $\mu$ while still obtaining the correct estimate of $ \hdim \mathcal P^{m}(E)$. Nevertheless, the penalty for dropping the irreducibility condition is that the estimate (\ref{BOZ k-affine planes}) will no longer be true for all subplane dimensions $1\leq m\leq d-1$, but only on a restricted range of them, namely $1\leq m\leq d_{\mu}-1$, where $d_{\mu}\leq d$ is defined by (\ref{irreducible dimension of E}). This restricted version of Theorem \ref{thm P^k planes BOM} can be found in Lemma \ref{lemma:improvement k-planes}. When $d_\mu<d$ Lemma \ref{lemma:improvement k-planes} does not imply the Continuum Beck-Type Theorem that was the main focus in \cite[Theorem 1.2]{BOZ}, but it is exactly the right tool to prove our main Theorem \ref{main} with no \textit{non-concentration} conditions imposed on $E$. The proofs for Lemma \ref{lemma:improvement k-planes} and Theorem \ref{main} can be found in Section \ref{Proof of main theorem}. 

\subsection{Non-concentration conditions: NC, Irreducibility and the Fubini Property} In the second part of the paper we present an alternative and more elementary approach towards obtaining an answer to the question addressed in Theorem \ref{thm P^k planes BOM}.

\begin{ques}\label{Question P^k planes}
    Let $E\subset \R^d$ be a Borel set supporting a $s$-$Frostman$ probability measure $\mu$ and $1 \leq k \leq d-1$. What \textbf{additional conditions} can be imposed on $E$ to obtain
    \[
    \hdim \mathcal P^{k}(E) \ge (k+1)\min\{s,d-k\}?
    \]
\end{ques}
As previously mentioned, Bright, Ortiz and Zakharov \cite{BOZ}, answered the Question \ref{Question P^k planes} under two differents \emph{non-concentration} assumptions on the Borel set $E$.  The first assumption is about the set $E$ admitting an irreducible $s$-$Frostman$ measure in the sense of Definition \ref{defn: irreducibility of a measure}, this is just Theorem \ref{thm P^k planes BOM} restated here.
\begin{thm}\cite[Theorem 4.1]{BOZ}. Let $E\subset \R^d$ be a Borel set supporting an \textbf{irreducible} $s$-Frostman probability measure. Then for all $1 \leq k \leq d-1$,
    \[
    \hdim \mathcal P^{k}(E) \ge (k+1)\min\{s,d-k\}.
    \]
\end{thm}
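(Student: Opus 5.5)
The plan is an induction on the number of points, with radial projections from lower-dimensional planes doing the geometric work. Write $\Phi(x_0,\ldots,x_k)=\aff(x_0,\ldots,x_k)\in\A(d,k)$. This map is defined on affinely independent tuples, so $\mathcal P^k(E)\supseteq\Phi(E^{k+1}\cap\{\text{affinely independent}\})$. It therefore suffices to show that the pushforward $\nu_k:=\Phi_*(\mu^{k+1})$ is a well-defined measure whose lower dimension is at least $t_k:=(k+1)\min\{s,d-k\}$.

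\textbf{Step 1: almost every tuple is independent.} Irreducibility (Definition \ref{defn: irreducibility of a measure}) gives this. Given affinely independent $x_0,\ldots,x_{j-1}$ with $j\le d$, their span is a proper affine subspace, hence $\mu$-null. Fubini then shows that $\mu^{k+1}$-a.e. tuple is affinely independent, so $\nu_k$ is well defined.

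\textbf{Step 1a: quantitative version.} By continuity of measure, $\mu^{k+1}\{\Vol_k(x_0,\ldots,x_k)<\delta\}\to 0$ as $\delta\to0$. So after discarding a set of small measure we may restrict to tuples whose $k$-volume is at least $\delta$ and which lie in a fixed ball. On this set $\Phi$ is bi-Lipschitz in the relevant directions. In particular, if $\aff(x_0,\ldots,x_k)$ lies within distance $r$ of $V$ in $\A(d,k)$, then every $x_i$ lies in $N_{C_\delta r}(V)$, and conversely. This is what lets us compare balls in $\A(d,k)$ with neighbourhoods of planes.

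\textbf{Step 2: the inductive decomposition.} Fix an independent $(k-1)$-tuple spanning $W\in\A(d,k-1)$. The map $x\mapsto \aff(W,x)$ is exactly the radial projection $\pi_W$ from $W$, i.e.\ the projection to the sphere of directions in $\R^d/W$, which has dimension $d-k$. The $k$-planes through $W$ thus form a $(d-k)$-dimensional family. The key input is a radial projection estimate from planes:
\[
\dim \pi_{W*}\mu\ \ge\ \min\{s,d-k\}\quad\text{for $\nu_{k-1}$-a.e.\ } W.
\]
The only obstruction to this estimate is concentration of $\mu$ on a single $k$-plane through $W$, and irreducibility excludes exactly that.

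To avoid overcounting (a $k$-plane contains a $k$-dimensional family of $(k-1)$-planes), I would not add dimensions of plane-sets. Instead I would estimate ball masses for the measure on tuples directly. For a ball $B(V,r)\subset\A(d,k)$, by Step 1a,
\[
\nu_k(B(V,r))\lesssim \int \mu\bigl(\{x_k: \aff(x_0,\ldots,x_k)\in B(V,r)\}\bigr)\,d\mu^{k}(x_0,\ldots,x_{k-1}),
\]
and the inner set is contained in a neighbourhood of $V$ that is thin in $d-k$ directions transverse to $\aff(x_0,\ldots,x_{k-1})$. Conditioning successively on $x_0,\ldots,x_{k-1}$, each point should contribute a factor $r^{\min\{s,d-k\}}$. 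For $x_0$ this follows from the Frostman condition on the $r$-neighbourhood of a $k$-plane once we average over $V$ in an energy integral, giving $r^{\min\{s,d-k\}}$ on average. For later points it comes from the radial projection bound above.

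Summing the $k+1$ factors gives finite $t$-energy $I_t(\nu_k)<\infty$ for every $t<t_k$. Hence $\hdim\mathcal P^k(E)\ge t_k$.

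\textbf{Main obstacle.} The hardest step is the radial projection estimate from a $(k-1)$-plane with the sharp exponent $\min\{s,d-k\}$. It must also hold uniformly enough in $W$ to be integrated against $\nu_{k-1}$, rather than for a single good $W$. I would first try to reduce it to the known point-pinned radial projection theorems (Orponen--Shmerkin--Wang type). To do this, quotient by the direction space of $W$, i.e.\ project $\mu$ orthogonally onto $W^{\perp}$, where $W$ becomes a point, and use Marstrand's projection theorem to keep dimension $\min\{s,d-k+1\}$ for typical $W$. Irreducibility of $\mu$ should pass to non-concentration of the projected measure on lines through the image point. The delicate part is that $W$ is not a generic plane but is itself spanned by points of $E$, so the exceptional set of bad $W$ must be shown to be $\nu_{k-1}$-null. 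I expect this to require the quantitative non-degeneracy from Step 1a together with an exceptional-set dimension bound that is smaller than $t_{k-1}$.
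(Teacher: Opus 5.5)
The paper does not prove this statement. It is quoted from Bright--Ortiz--Zakharov and used only as a black box (in Lemma~\ref{lemma:improvement k-planes} and hence Theorem~\ref{main}), so your argument has to stand on its own. Steps~1 and~1a are fine. Both places where the exponent $\min\{s,d-k\}$ is supposed to come from, however, are genuine gaps.

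The first gap is the bookkeeping in Step~2: successive conditioning does not give a factor $r^{\min\{s,d-k\}}$ per point.
\begin{itemize}
\item Intermediate points. Once $x_0,\dots,x_{j-1}$ are fixed ($1\le j\le k-1$), the only constraint on $x_j$ is $x_j\in N_{Cr}(V)$. In the radial projection from $\aff\{x_0,\dots,x_{j-1}\}$ this is the $r$-neighbourhood of a $(k-j)$-dimensional great subsphere of $S^{d-j}$, not of a point. Even a perfect Frostman bound for the projected measure then gives only $r^{\min\{s,d-j\}-(k-j)}$. So for $k\ge 2$ you fall short, by $\binom{k}{2}$ in the exponent when $s\le d-k$; this is exactly the overcounting you wanted to avoid. (For $k=1$ your two-factor scheme is structurally sound, once the average below is justified.)
\item The point $x_0$. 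The Frostman condition gives only $\mu(N_r(V))\lesssim r^{s-k}$. The averaged bound $\int\mu(N_r(V))\,d\nu_k(V)\lesssim r^{\min\{s,d-k\}}$ does not follow from it. It says that $k+2$ independent $\mu$-random points are rarely $r$-close to affinely dependent, which is itself a radial projection statement; for $d=2$, $k=1$, $s\le1$ it already implies the Orponen--Shmerkin--Wang theorem.
\end{itemize}
The bookkeeping that works treats all $k+1$ points alike. Since $\nu_k(B(V,r))\le\mu(N_{Cr}(V))^{k+1}$, it suffices to prove a thin-plates bound: $\mu(N_r(\aff\{y_0,\dots,y_k\}))\le r^{\min\{s,d-k\}-\epsilon}$ for $r<r_0(\mathbf y)$ and $\mu^{k+1}$-a.e.\ $\mathbf y$. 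Then $\nu_k$ has lower local dimension at least $(k+1)(\min\{s,d-k\}-\epsilon)$ almost everywhere, and the mass distribution principle finishes. Radial projection from $W=\aff\{y_0,\dots,y_{k-1}\}$ enters here, evaluated at the specific point $\pi_W(y_k)$ and at every scale. The part of $N_r(V)$ close to $W$ is controlled by the same bound one level down. So you need a Frostman-at-all-scales (thin-tubes) statement, not merely $\dim\pi_{W*}\mu\ge\min\{s,d-k\}$.

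The second gap is that this radial projection estimate is the heart of the theorem, and Step~2 simply assumes it. Your heuristic that the only obstruction is concentration on a $k$-plane through $W$ fails pin by pin. In $\R^2$ with $k=1$, take Lebesgue measure in $t\in[1,2]$ times a Cantor measure of dimension $c<1$ on directions $\theta$, placed on the cone $\{w+t\theta\}$. This measure charges no line, yet its radial projection from $w$ has dimension $c<\min\{1+c,1\}$. Irreducibility helps only for typical pins, and establishing that is deep.

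The reduction you sketch does not reach typical pins:
\begin{itemize}
\item Since $W_0$ is spanned by points of $E$, Marstrand's theorem (a Haar-a.e.\ statement) does not apply.
\item Exceptional-set bounds such as Theorem~\ref{exmar2} control only the dimension of the bad set. They produce one good direction, as in the proof of Theorem~\ref{main}. A $\nu_{k-1}$-a.e.\ statement would need the directions of spanned $(k-1)$-planes to carry a measure whose Frostman exponent beats that bound, which you have not shown.
\item The pin $P_{W_0^\perp}(W)$ is the common image of $x_0,\dots,x_{k-1}$, so pin and projected measure are coupled. Pinned radial projection theorems need the pin to be typical for a measure fixed in advance.
\end{itemize}

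A reduction that avoids both problems is to iterate point-pinned projections. The map $\pi_{x_0}\colon\R^d\setminus\{x_0\}\to\R P^{d-1}$ preserves irreducibility, because a proper projective subspace pulls back to a proper affine subspace through $x_0$. The composition of the successive pinned projections (first at $x_0$, then at the image of $x_1$, and so on) is the radial projection from $\aff\{x_0,\dots,x_{k-1}\}$. At each stage the new pin is distributed by the very measure being projected. So a pinned theorem in $\R^m$ for measures charging no proper affine subspace (the Orponen--Shmerkin--Wang type result you mention, in its higher-dimensional thin-tubes form) can be applied $k$ times to reach $\min\{s,d-k\}$. Carrying this out, together with the thin-plates bookkeeping above, is the missing core of the proof.
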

Their main result, however, concerns the use of a second non-concentration notion on the Borel set $E$ in the case where no irreducible measure exists.
\begin{defn}\cite[Definition 1.1]{BOZ}. Let $d \ge 2$, and let $E\subset \R^d$ be Borel. We say that $E$ is \emph{non-concentrated}, or simply $\mathbf{NC}$, if for any $r \ge 1$ and any collection of affine subspaces $F_1,\dots, F_r\subset\R^d$ such that $\sum_{i=1}^r \hdim F_i \leq d-1$, we have 
    \[
    \hdim \Big(E\setminus \bigcup_{i=1}^r F_i\Big) = \hdim E.
    \]
\end{defn}
They realized that inside a set $E$ with the $\mathbf{NC}$ condition, it is possible to find $V_1,\dots, V_k$ affine flats and corresponding irreducible measures $\mu_1,\dots, \mu_k$ supported in $V_i\cap E$ in good position that together span sufficiently  thin and abundant $k$-planes. Their main result, only stated for the case $k=d-1$, although a more general statement following by the same proof, is as follows. 
\begin{thm}\label{thm:mainBOZ}\cite[Theorem 1.2]{BOZ}.
Let $E\subset \R^d$ be a Borel and $\mathbf{NC}$ set supporting a $s$-Frostman probability measure $\mu$.  Then, for all $1 \leq k \leq d-1$
\[
\hdim \mathcal P^{k} (E) \geq (k+1)\min\{s,d-k\}.
\]
\end{thm}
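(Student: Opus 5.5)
The plan is to reduce the $\mathbf{NC}$ case to the irreducible case of Theorem \ref{thm P^k planes BOM}. We extract finitely many affine flats $V_1,\dots,V_r$, each carrying an irreducible Frostman measure supported on $E\cap V_i$ with exponent arbitrarily close to $s$. The flats should be in general position, with affine span all of $\R^d$. We then show that $k$-planes spanned by points drawn from these pieces already form a family of dimension $(k+1)\min\{s,d-k\}-\epsilon$. Throughout we may replace $s$ by $s-\epsilon$ and let $\epsilon\to 0$ at the end.

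\emph{Step 1 (one irreducible piece).} Let $\nu$ be any $t$-Frostman measure. Choose an affine subspace $V$ of minimal dimension with $\nu(V)>0$. Then $\nu|_V$, normalized, is irreducible on $V$ by minimality. It is still $t$-Frostman, so in particular $t\le \dim V$.

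\emph{Step 2 (iteration using $\mathbf{NC}$).} Apply Step 1 to $\mu$ to get $V_1$ and $\mu_1$. Suppose $V_1,\dots,V_j$ have been found with $\sum_i \dim V_i\le d-1$. Then $\mathbf{NC}$ gives $\hdim(E\setminus \bigcup_i V_i)=\hdim E\ge s$. By Frostman's lemma this set carries an $(s-\epsilon)$-Frostman measure, and Step 1 produces a new irreducible piece $(V_{j+1},\mu_{j+1})$ with $\mu_{j+1}(V_i)=0$ for all $i\le j$. I would continue this while it increases the affine span, and stop once the span of $V_1\cup\dots\cup V_r$ is $\R^d$. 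To make the stopping point sensible, I would arrange the choice of flats so that the dimension of the span grows by $\dim V_{j+1}$ at each stage whenever possible, using the fact that $\mu_{j+1}$ charges no proper subflat of $V_{j+1}$. The same irreducibility shows that $\mu_{j+1}$-almost every point lies off the current span unless $V_{j+1}$ is contained in it.

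\emph{Step 3 (counting planes).} Write $d_i=\dim V_i$. Fix a split $k+1=\sum_i m_i$ with $m_i\le d_i+1$ and $\sum_i (d_i-m_i+1)\ge d-k$ roughly balanced. Consider the product measure $\bigotimes_i \mu_i^{\otimes m_i}$ and push it forward under the map sending $(x_0,\dots,x_k)$ to their affine span. Inside each $V_i$, Theorem \ref{thm P^k planes BOM} applied to $\mu_i$ (irreducible in $V_i$) gives $\hdim\mathcal P^{m_i-1}(E\cap V_i)\ge m_i\min\{s,d_i-m_i+1\}$. Because the $V_i$ are in general position, the join of the planes $P_i\subset V_i$ determines the tuple $(P_i)_i$ locally in a Lipschitz way. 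Hence $\hdim\mathcal P^k(E)\ge \sum_i m_i\min\{s,d_i-m_i+1\}$. It remains to choose the $m_i$ (and, if needed, which flats to use) so that this sum is at least $(k+1)\min\{s,d-k\}$. If $s\le d-k$ we want every $d_i-m_i+1\ge s$; this can be arranged since $d_i\ge s$ and $\sum_i d_i\ge d$. If $s>d-k$, we instead need the total codimension deficit $\sum_i m_i(d_i-m_i+1)$ to reach $(k+1)(d-k)$.

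\emph{Main obstacle.} I expect Step 3 to be the difficult part, on two counts. The first is the bookkeeping that the allocation $(m_i)$ achieves the target bound in the regime $s>d-k$. The second is justifying that the join map is injective with Lipschitz inverse on a set of positive product measure. For the latter I would first try to use irreducibility of each $\mu_i$ to ensure that almost surely the chosen points are affinely independent and that the $P_i$ meet each other's spans only trivially. If the direct allocation fails, the fallback is to feed the $\mathbf{NC}$ condition back in more aggressively. That means choosing the flats so that all $d_i$ are as large as possible, ideally a single flat of dimension $d$, in which case Theorem \ref{thm P^k planes BOM} applies directly.
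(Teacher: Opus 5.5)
The paper does not prove this statement. It cites \cite[Theorem 1.2]{BOZ}, which is stated there only for hyperplanes ($k=d-1$), and asserts that general $k$ ``follows by the same proof''. Your plan (irreducible pieces on flats $V_i$ in good position, joined into thin families of $k$-planes) is exactly that sketched strategy. The genuine gap is the allocation in Step 3, and it is more than bookkeeping. You claim that for $s\le d-k$ one can take $m_i\ge 0$ with $\sum_i m_i=k+1$ and $d_i-m_i+1\ge s$ for every used flat ``since $d_i\ge s$ and $\sum_i d_i\ge d$''. That claim is false. What you actually need is $\sum_i (d_i+1-\lceil\sigma\rceil)_+\ge k+1$ with $\sigma=\min\{s,d-k\}$. $\mathbf{NC}$, whose budget $\sum_i\dim F_i\le d-1$ is calibrated to hyperplanes, guarantees this when $\sigma\le 1$ (e.g.\ $k=d-1$), where it follows from $\sum_i d_i\ge d$, but not in general. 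Even in that case your stopping rule is premature. For $E$ a union of three generic lines in $\R^3$ and $k=2$, two skew lines already span $\R^3$, yet stopping there leaves only splits like $(2,1)$, which give a $1$-dimensional family of planes. You should keep applying $\mathbf{NC}$ to $V_1,\dots,V_j$ and extracting new pieces as long as $\sum_i\dim V_i\le d-1$, so that you end with $\sum_i d_i\ge d$.

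More seriously, no choice of flats can rescue general $k$, because the statement is false for $k<d-1$. Take $d=4$, $k=2$, $V_1=\R^2\times\{0\}^2$, $V_2=\{0\}^2\times\R^2$, $E=([0,1]^2\times\{0\}^2)\cup(\{0\}^2\times[0,1]^2)$, and $\mu$ the normalized area measure on $E$, so $s=2$. A flat meeting $V_i$ in a proper subflat removes only an area-null part of the $i$-th square. So lowering the dimension of $E$ requires removing a flat containing $V_1$ and a flat containing $V_2$, of total dimension at least $4>d-1$; hence $E$ is $\mathbf{NC}$. However, among any three affinely independent points of $E$ two lie in the same square. So apart from $V_1,V_2$ themselves, every plane in $\mathcal P^2(E)$ is $\aff(\ell\cup\{x\})$ with $\ell$ a line in $V_i$ and $x\in V_{3-i}$. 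This is a locally Lipschitz image of a $4$-dimensional manifold, so $\hdim\mathcal P^2(E)\le 4<6=(k+1)\min\{s,d-k\}$. Your scheme sees this directly: $r=2$, $d_1=d_2=2$, and the best split $(m_1,m_2)=(2,1)$ gives $2\cdot 1+1\cdot 2=4$. Your argument can still be completed in the hyperplane case treated in \cite{BOZ}, with the corrected stopping rule and the general-position step you flagged. The extension to all $1\le k\le d-1$, however, needs a stronger non-concentration hypothesis than $\mathbf{NC}$.
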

Along the same lines, we also obtain an affirmative answer to the Question \ref{Question P^k planes}. However, we do this by using an approach based on Marstrand's projection and slicing theorems and the use of a third notion of non-concentration distinct from the two considered previously; this non-concentration property is called the \emph{Fubini Property}, following the definition by H\'era, Keleti, and M\'ath\'e in \cite[Definition 2.5]{HKM}. 
\begin{defn}\label{Fubini definition}
 We say that a nonempty set $B\subset \R^m\times \R^n$  has the \textbf{Fubini Property}, if 
 \begin{equation}\label{Fubini property intro}
     \hdim(B)= \hdim(\pi_X(B))+\esssup^{\alpha}_{x\in \R^m}\hdim(\pi_X^{-1}(x)\cap B),
 \end{equation}
 where $\pi_X$ is the orthogonal projection onto $\R^m$. Here, the $\esssup$ is defined as follows: 
 
 Let $\alpha=\hdim(\pi_X(B))$ and $B_x=\pi_X^{-1}(x)$. If $\mathcal{H}^{\alpha}(\pi_X(B))>0$, then
\begin{align}
\label{eqessup}
\esssup_{x\in \R^m}^\alpha\hdim(B_x):= & \sup\{q \geq 0: \mathcal{H}^{\alpha}(\{x\in \R^m: \hdim(B_x) > q \}) >0\} \nonumber \\ 
=& \inf\{q \geq 0: \hdim(\{x \in \R^m: \dim(B_x)> q \}) =0\},
\end{align}
where $\sup(\emptyset)$ is taken to be zero. If $\mathcal{H}^{\alpha}(\pi_X(B))=0$, we let 
\begin{equation}
\label{essnull}
\esssup^\alpha_{x\in \R^m}\hdim(B_x)=\lim_{\varepsilon \to 0+} \esssup^{\alpha-\varepsilon}(\hdim(B_x)). 
\end{equation}
\end{defn}

The terminology makes reference to an analogy with the regular Fubini's theorem, meaning,  the measure of $B$ is obtained by adding the size of the base set that parametrizes the vertical fibers plus the typical size of the fibers. Note that the following inequality always holds true
 \begin{equation}
     \displaystyle \hdim(\pi_X(B))+\esssup^\alpha_{x\in\R^m}\hdim(\pi_X^{-1}(x)\cap B)\leq \hdim(B),
 \end{equation}
 as can be seen by setting $\displaystyle t_{\varepsilon}=\esssup^\alpha_{x\in \R^m}\hdim(\pi_X^{-1}(x)\cap B)-\varepsilon$, with $\varepsilon>0$ and $\varepsilon\to0$ in equation (\ref{inequality essinf}) (where $B_{x_0}=\pi_X^{-1}(x_0)\cap B$), see Figure \ref{fig:Fubini property} for an illustration.
 \begin{figure}[H]
    \centering
    \includegraphics[scale=0.19]{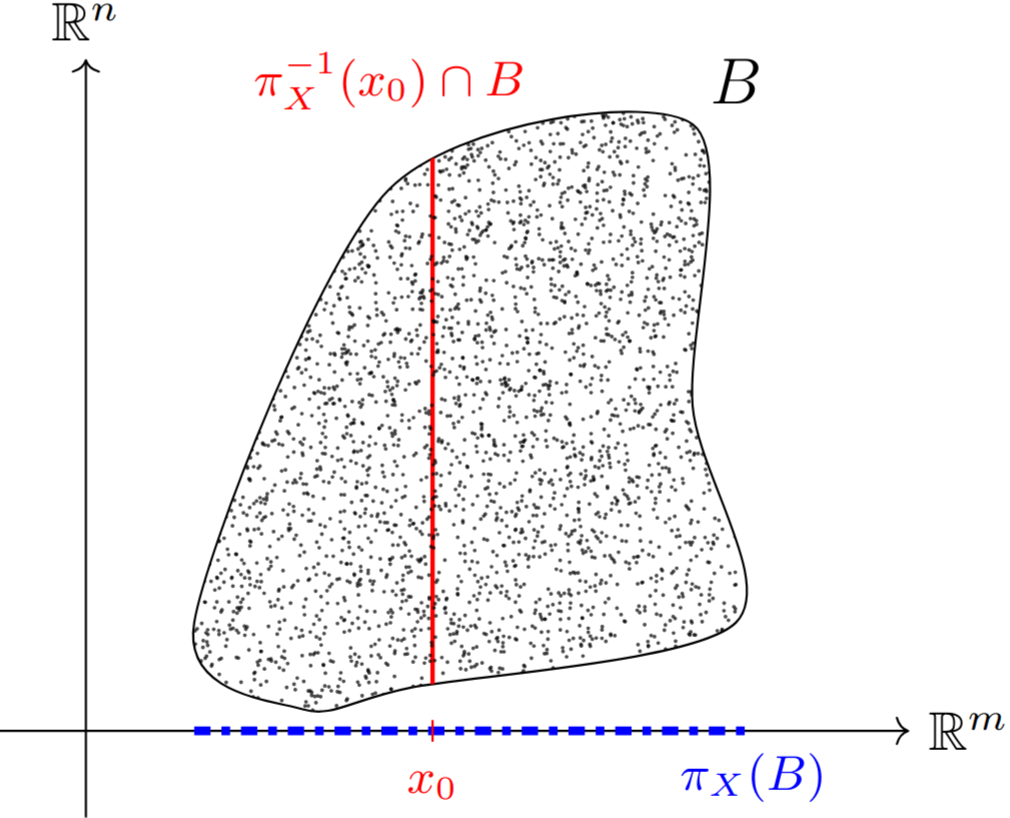}
\caption{Representation of $B\subset\R^{m+n}$ sliced by $\pi_X^{-1}$, in relation to the Fubini property.}
 \label{fig:Fubini property}
\end{figure}
 It is well known that the Fubini property (\ref{Fubini property intro}) does not hold in general, for instance, Falconer in \cite{Falcbook2,Falcbook} provides the construction of two classic examples. See also \cite[Theorem 2.2]{HKM} and references therein for a related example with $f:[0,1]\to[0,1]^n$ whose graph has Hausdorff dimension $n+1$. 

\begin{example}\cite[Example 7.8]{Falcbook} There exist sets $E, F\subset\R$  with $\hdim E = \hdim F = 0$ and $\hdim(E \times F )=1$. In this case, given $B=E \times F$, then $\pi_X(B)=E$ and $\hdim(\pi_X^{-1}(x))=\hdim F=0$ for all $x\in \pi_X(B)$, therefore, the right-hand side of $(\ref{Fubini property intro})$ is $0+0$ while $\hdim(B)=1$.
\end{example}
Consider the following function 
\begin{align}\label{map Fubini}
\psi_k: &E^{k+1}\subset \R^{(k+1)d} \xrightarrow{\hspace*{0.6cm}} \mathcal{A}(d,k) \\
&(x_0,x_1,\dots,x_k) \longmapsto \aff\{x_0,x_1,\dots,x_k\} \nonumber
\end{align}
That is, $\psi_k$ sends a $(k+1)$-tuple $\boldsymbol{x}=(x_0,x_1,\dots,x_k)$ to the affine span $\aff\{\boldsymbol{x}\}$. Note that $\psi_k$ is well defined only on the restriction of $E^{k+1}$ such that $\{x_0, \cdots, x_{k}\}$ are affine linearly independent (otherwise their span would be lower dimensional). We denote by $D$ the subset of degenerate spans. Thus, consider the following incidence set \begin{equation}\label{Incidence set S}
\mathcal{S}:=\{(\psi_k(\boldsymbol{x}),\boldsymbol{x}): \boldsymbol{x}\in E^{k+1}\setminus D\},    
\end{equation}
which is the inverted graph of $\psi_{k}|_{E^{k+1}\setminus D}$. 
    
Also, let us use certain slice sections of $E^{k+1}\subset\R^{d(k+1)}$. For each $W\in \mathcal P^{k} (E)$, see that $W^{k+1}=W\times \dots \times W\subset\R^{d(k+1)}$ is a $k(k+1)$-affine plane in $\R^{d(k+1)}$ and thus define the following collection of diagonal affine subplanes restricted to $\mathcal P^{k} (E)$,
    \begin{equation}\label{Subset W^k}
\mathcal{W}^{k+1}(E):=\{(W^{k+1}: W\in \mathcal P^{k} (E)\}\subset\A(d(k+1),k(k+1)).
\end{equation}
The collection $\mathcal{W}^{k+1}(E)$ is lower dimensional inside $\A(d(k+1),k(k+1))$. Let $\alpha$ to denote the dimension of $\A(d(k+1),k(k+1))$, if we set $\beta=\hdim(\mathcal{W}^{k+1}(E))=\hdim( \mathcal P^{k} (E))$ then we can observe that $\beta\ll \alpha$. Thus, it is not guaranteed that in this smaller collection the dimension of the slices are not atypically large. Our answer to the Question \ref{Question P^k planes} is the following.
\begin{thm}\label{k-affine span Fubini}
Let $E\subset \R^d$ be a Borel set with $\hdim(E)>s>k$, satisfying the following.
\begin{enumerate}
    \item The incidence set $\mathcal{S}$ defined by $(\ref{Incidence set S})$ satisfies the \textbf{Fubini property} $(\ref{Fubini property intro})$.

        \item The inequality 
        \begin{equation}\label{restricted W's assumption}
        \esssup^\beta_{W^{k+1}\in\mathcal{W}^{k+1}(E)} \hdim(E^{k+1}\cap W^{k+1})\leq\max\left\{0,\hdim(E^{k+1})-(k+1)(d-k)\right\} 
        \end{equation}
         holds true, where $\beta=\hdim(\mathcal P^{k} (E))$ and $\mathcal{W}^{k+1}(E)$  is given by $(\ref{Subset W^k})$. In other words, the slices of $E^{k+1}$ by elements in $\mathcal{W}^{k+1}(E)$ are not atypically large almost every time.
\end{enumerate}
Then for all $1 \leq k \leq d-1$,
\begin{equation*}
\hdim \mathcal P^{k} (E) \geq (k+1)\min\{s,d-k\}. 
\end{equation*}
\end{thm}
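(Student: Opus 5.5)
The plan is to apply the Fubini property to the incidence set $\mathcal{S}$, with the base coordinate being the plane $\psi_k(\boldsymbol{x})\in\A(d,k)$ and the fibre coordinate being the tuple $\boldsymbol{x}\in\R^{d(k+1)}$. Since $\mathcal{S}$ is the inverted graph of $\psi_k$ restricted to $E^{k+1}\setminus D$, the projection of $\mathcal{S}$ to the $\boldsymbol{x}$-coordinate is injective and bi-Lipschitz on compact pieces away from $D$. Hence $\hdim\mathcal{S}=\hdim(E^{k+1}\setminus D)$.

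First we check that $D$ is negligible, that is,
\[
\hdim(E^{k+1}\setminus D)\geq \hdim(E^{k+1})\geq (k+1)s .
\]
For this we work with a product Frostman measure $\mu^{k+1}$, where $\mu$ is $s$-Frostman on $E$ with $s>k$. The degenerate set $D$ is the set of tuples in which some $x_j$ lies in the affine span of the others, and that span has dimension at most $k-1<s$. By Fubini for the product measure, $\mu^{k+1}(D)=0$. So $\mu^{k+1}$ restricted to the complement of $D$ is a $(k+1)s$-Frostman measure on $E^{k+1}\setminus D$, which gives the bound. Here we use the standard fact $\hdim(A\times B)\geq\hdim A+\hdim B$.

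Next, the projection of $\mathcal{S}$ onto the base factor is exactly $\mathcal P^k(E)$, which has dimension $\beta$. The fibre over a plane $W$ is $\{(W,\boldsymbol{x}):\boldsymbol{x}\in (E\cap W)^{k+1}\setminus D\}$, and it is isometric to a subset of $E^{k+1}\cap W^{k+1}$. So the fibre dimensions are exactly what hypothesis (2) controls. The Fubini property (1) gives
\[
(k+1)s\leq \hdim\mathcal{S}=\beta+\esssup^\beta_{W}\hdim(\text{fibre})\leq \beta+\max\{0,\hdim(E^{k+1})-(k+1)(d-k)\}.
\]
Here the esssup over $W\in\mathcal P^k(E)$ is identified with the esssup over $W^{k+1}\in\mathcal W^{k+1}(E)$ through the bi-Lipschitz correspondence $W\mapsto W^{k+1}$, which preserves $\beta$ and the notion of $\mathcal H^\beta$-null sets.

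Finally we split into cases. If the max equals $0$, we get $\beta\geq\hdim\mathcal S\geq (k+1)s\geq (k+1)\min\{s,d-k\}$. Otherwise we have the inequality $\beta\geq \hdim\mathcal S-\hdim(E^{k+1})+(k+1)(d-k)$. To close this case we need $\hdim\mathcal S=\hdim(E^{k+1})$ exactly, not just the lower bound $(k+1)s$. This holds because the graph correspondence gives $\hdim\mathcal S=\hdim(E^{k+1}\setminus D)\leq\hdim E^{k+1}$, and the negligibility of $D$ from the first step is meant to upgrade this to equality, i.e.\ to show $\hdim(E^{k+1}\setminus D)=\hdim(E^{k+1})$ rather than merely $\geq(k+1)s$. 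Granting that, the inequality yields $\beta\geq (k+1)(d-k)\geq (k+1)\min\{s,d-k\}$.

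The main obstacle is the bookkeeping in the first step. Two things must hold there: $\mathcal{S}$ and $E^{k+1}\setminus D$ must have equal dimension, even though $\psi_k$ is only locally Lipschitz off $D$, so we pass to a countable exhaustion by compact sets at positive distance from $D$ and use countable stability. And removing $D$ must cost nothing in dimension. That second point does not follow from the Frostman argument alone, since that argument only gives the lower bound $(k+1)s$. If it cannot be established, the second case must instead be run with $(k+1)s$ in place of $\hdim(E^{k+1})$, which requires reading hypothesis (2) with that quantity. The identification of the esssup over $\mathcal P^k(E)$ with the one over $\mathcal W^{k+1}(E)$ should be routine.
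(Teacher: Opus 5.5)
There is a genuine gap, and you flagged it yourself. Your outline is the paper's argument: identify $\hdim\mathcal S$ with $\hdim(E^{k+1}\setminus D)$ through the graph map, note that the fibre over $W$ lies in $E^{k+1}\cap W^{k+1}$, then apply the Fubini property and hypothesis (2) to get $\beta\geq\hdim(E^{k+1}\setminus D)-\max\{0,\hdim(E^{k+1})-(k+1)(d-k)\}$.

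The gap is in the case $\hdim(E^{k+1})>(k+1)(d-k)$. There you need $\hdim(E^{k+1}\setminus D)=\hdim(E^{k+1})$, but you only prove $\hdim(E^{k+1}\setminus D)\geq(k+1)s$. The Frostman argument cannot give more. The fact $\mu^{k+1}(D)=0$ only says $D$ is null for one particular $(k+1)s$-dimensional measure. Meanwhile $\hdim(E^{k+1})$ can be strictly larger than $(k+1)s$; Hausdorff dimension is only superadditive under products, so it can even exceed $(k+1)\hdim E$. A $\mu^{k+1}$-null set could a priori carry that extra dimension. Your fallback of putting $(k+1)s$ in place of $\hdim(E^{k+1})$ in hypothesis (2) strengthens the hypothesis, so it proves a different statement.

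The missing step is a direct dimension bound on $D$, which is what the paper uses: $\hdim D\leq\hdim(E^k)+(k-1)<\hdim(E^k)+s\leq\hdim(E^{k+1})$.

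\begin{itemize}
\item If a tuple lies in $D$, then some coordinate, say $x_k$ after relabelling, lies in $\aff\{x_0,\dots,x_{k-1}\}$. So that part of $D$ is contained in the image of $E^k\times\R^{k-1}$ under the polynomial, hence locally Lipschitz, map $(x_0,\dots,x_{k-1},t)\mapsto\bigl(x_0,\dots,x_{k-1},x_0+\sum_{i=1}^{k-1}t_i(x_i-x_0)\bigr)$.
\item Since the cube $[-N,N]^{k-1}$ has upper box dimension $k-1$, we have $\hdim(A\times[-N,N]^{k-1})\leq\hdim A+(k-1)$. Countable stability then gives $\hdim D\leq\hdim(E^k)+k-1$.
\item On the other hand, $\hdim(E^{k+1})\geq\hdim(E^k)+\hdim E>\hdim(E^k)+k-1$.
\end{itemize}

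Thus $\hdim D<\hdim(E^{k+1})$, so $\hdim(E^{k+1}\setminus D)=\hdim(E^{k+1})$, and your second case yields $\beta\geq(k+1)(d-k)$. This estimate relies on the fibres of $D$ over $E^k$ being contained in entire $(k-1)$-planes that depend smoothly on the base point. A bound of the form ``base dimension plus fibre dimension'' is false for general fibred sets in Hausdorff dimension.
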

For this result we imposed two conditions on the set $E$, in Section \ref{Fubini section} we will analyze them in more detail and will also explain why they are natural assumptions to consider.

The non-concentration conditions considered in the three Theorems of this section require the set $E$ to be well spread along the $d$ dimensional space. These notions fail to capture the abundance of $\mathcal P^{k} (E)$ in the case when a large portion of $E$ is contained inside a lower dimensional subspace of $\R^d$. Let us take a look at the following example:

\begin{example}\label{exampple: failing 3 conditions}
Let $C\subset\R^1$ be the middle third Cantor set with dimension $c:=\hdim(C)=\dim_{\text{P}}(C)=\log_3(2)$. Let $E=[0,1]^2\times C\times \{0\}\subset \R^4$ and thus $2<\hdim(E)=2+c<3$. Since $\aff(E)\subset\R^3\times\{0\}$ it is easy to see that $\hdim(\mathcal P^{2}(E))$ is not close to $ \hdim(\A(4,2))=6$, but rather $\hdim(\mathcal P^{2}(E))\leq\hdim(\A(3,2))=3$. Indeed, we can verify that the dimensions are equal:  Let $a,b\in C$ with $a<b$, consider the horizontal sections $V_a=\R^2\times\{a\}\times\{0\}$ and $V_b=\R^2\times\{b\}\times\{0\}$ in $\A(4,2)$, note that $E\cap V_a$ and $E\cap V_b$ are both liftings of  $[0,1]^2$ by $a$ and $b$, respectively. Then, if $F=(E\cap V_a)\cup (E\cap V_b)$ it can be seen that $\mathcal{P}^2(F)$ has positive Haar measure, then $\hdim(\mathcal{P}^2(F))=\hdim(\A(3,2))$, the same is true for $\mathcal{P}^2(E)$. 
 
 This shows that the set must fail the three non-concentration conditions considered in this section, and none of the theorems could be applied.
\begin{itemize}
    \item[(a)] Any $\mu$ $s-$Frostman measure supported on $E$ satisfies $\mu=\mu|_{\R^3\times\{0\}}$, then $\mu$ is not $\textbf{irreducible}$ on $\R^4$.
    \item[(b)] Similarly, given $F_1=\R^3\times\{0\}$, with $\dim F_1=3$ and $\hdim (E\setminus F_1)=0<\hdim(E).$ Then $E$ is not \textbf{NC}.
    \item[(c)] By the above discussion we have that $\mathcal{P}^2(F)$ has positive Haar measure, so by Marstrand's slicing theorem restricted to affine $2$-planes inside $\R^3\times \{0\}$ we get 
    \begin{equation*}
    \esssup^3_{W\in\mathcal{P}^2(E)} \hdim(E^3\cap W^3)\geq \esssup^3_{W\in\mathcal{P}^2(F)} \hdim(E^3\cap W^3)=\max\left\{0,\hdim(E^3)-3(3-2)\right\}=3(1+c), 
        \end{equation*}
this number is larger than $3c$ which is the value given by $(\ref{restricted W's assumption})$ with $d=4$ and $k=2$, so $E$ fails the condition $(\ref{restricted W's assumption})$ as well.
\end{itemize}
\end{example}
As shown in Example \ref{exampple: failing 3 conditions}, certain sets may have plentiful affine spans, which may not be captured by the previous theorems. In order to address that, we introduce the following definition.

\begin{defn}
Given $E$, with $k<\hdim(E)$. We define the \textbf{effective dimension} of the set $E$, to be the number 
\begin{equation}\label{effective dimension of E}
    d_E:=\min\{m \in \mathbb{N}\ : \exists \ V\in\mathcal{A}(d,m) \text{ such that }\hdim(E\setminus V)<\hdim(E) \}.
\end{equation}
Thus, $d_{E}$ represents the minimal affine dimension required to contain all of the maximally dimensional parts  of the set $E$ in $\mathbb{R}^d$. Note that by definition, $d_E$ exists, and it satisfies $k<\hdim(E)\leq d_E\leq d$.
\end{defn}
Notice that the set in Example \ref{exampple: failing 3 conditions} has $d_E=3$ with minimal subspace $V=\R^3\times\{0\}$, this shows why the dimension of  $\mathcal{P}^2(E)$ is not comparable to the dimension of $\mathcal{A}(4,2)$, but rather to that of  $\mathcal{A}(3,2)$. By using this notion of dimension we can capture abundance of affine $k$-planes, where the previous theorems in this section are unable to yield any results.
\begin{cor}\label{corollary: effective dimension}
    Let $E\subset \R^d$ be a Borel set with $\hdim(E)>s>k$, then $k+1\leq d_E\leq d$ and given $V\in\A(d,d_E)$ of minimal dimension satisfying $(\ref{effective dimension of E})$, let $F=E\cap V$, and assume that
\begin{enumerate}
    \item The incidence set $\mathcal{S}$ for $\psi_{k}|_{F^{k+1}\setminus D}$ defined as in $(\ref{Incidence set S})$ satisfies the \textbf{Fubini property} $(\ref{Fubini property intro})$.

        \item The inequality 
        \begin{equation}
        \esssup^\beta_{W^{k+1}\in\mathcal{W}^{k+1}(F)} \hdim(F^{k+1}\cap W^{k+1})\leq\max\left\{0,\hdim(F^{k+1})-(k+1)(d_E-k)\right\} 
        \end{equation}
         holds true, where $\beta=\hdim(\mathcal P^{k} (F))$ and $\mathcal{W}^{k+1}(F)$  is given as in $(\ref{Subset W^k})$.
\end{enumerate}
Then for all $1 \leq k \leq d_E-1$
\begin{equation*}
\hdim \mathcal P^{k} (E)\geq \hdim \mathcal P^{k} (F) \geq (k+1)\min\{s,d_E-k\}. 
\end{equation*}
\end{cor}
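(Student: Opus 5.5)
The plan is to reduce the corollary to Theorem \ref{k-affine span Fubini}, applied inside the affine subspace $V\cong\R^{d_E}$ with $F=E\cap V$ in place of $E$. First we check that $k+1\le d_E\le d$. By definition $\hdim(E\setminus V)<\hdim(E)$, and therefore $\hdim(F)=\hdim(E)>s>k$, because $\hdim(E)=\max\{\hdim(F),\hdim(E\setminus V)\}$. Since $F\subset V$ and $\hdim(V)=d_E$, this gives $d_E\ge\hdim(F)>k$. As $d_E$ is an integer, $d_E\ge k+1$. The bound $d_E\le d$ is immediate, since $V=\R^d$ itself is admissible in (\ref{effective dimension of E}).

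Next we choose an isometry $T:V\to\R^{d_E}$, fixing a base point and an orthonormal frame. Hausdorff dimension is preserved under $T$. The map $\aff$ commutes with $T$, so $T$ induces a bi-Lipschitz identification (locally, with the standard metric) between the $k$-planes contained in $V$ and $\A(d_E,k)$. Under this identification $\mathcal P^k(F)$ corresponds to $\mathcal P^k(T(F))$. The incidence set $\mathcal S$ for $\psi_k|_{F^{k+1}\setminus D}$ is carried to the incidence set for $T(F)$ by the product map $(T_*,T^{k+1})$, which is bi-Lipschitz. The Fubini property is preserved under such a product map: the projection $\pi_X$ intertwines with it, so fibers correspond bi-Lipschitzly and both $\hdim$ and $\mathcal H^\alpha$-null sets are preserved. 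Likewise $\mathcal W^{k+1}(F)$ and the slices $F^{k+1}\cap W^{k+1}$ are carried isometrically, and $\beta$ is unchanged. So hypotheses (1) and (2) of the corollary are exactly hypotheses (1) and (2) of Theorem \ref{k-affine span Fubini} for the set $T(F)\subset\R^{d_E}$, with ambient dimension $d_E$ replacing $d$ in (\ref{restricted W's assumption}). Since $\hdim(T(F))>s>k$, the theorem gives, for $1\le k\le d_E-1$,
\[
\hdim\mathcal P^k(F)=\hdim\mathcal P^k(T(F))\ge (k+1)\min\{s,d_E-k\}.
\]

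Finally, $F\subset E$ gives $\mathcal P^k(F)\subset\mathcal P^k(E)$, since every $k$-plane spanned by affinely independent points of $F$ is spanned by points of $E$. The inclusion of $\A(d_E,k)\cong\{$planes in $V\}$ into $\A(d,k)$ is a smooth embedding, locally bi-Lipschitz, so dimension is not lost. Monotonicity then yields $\hdim\mathcal P^k(E)\ge\hdim\mathcal P^k(F)$.

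The main obstacle is the transfer step. One must ensure that the metric on $\A(d,k)$ restricted to planes inside $V$ is comparable to the intrinsic metric of $\A(d_E,k)$. This holds because the restricted manifold is a smooth compact-fibered submanifold, so the comparison is locally bi-Lipschitz. One must also check that the essential supremum in (\ref{eqessup}) is invariant under bi-Lipschitz maps of the base; this holds since $\mathcal H^\beta$-positivity is preserved. Once these checks are made, everything else is bookkeeping.
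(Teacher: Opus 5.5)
Your proposal is correct and follows the same route as the paper. Both pass to $F=E\cap V$, use $\hdim(F)=\hdim(E)$, identify $V\cong\R^{d_E}$ by a rigid motion, apply Theorem \ref{k-affine span Fubini} in ambient dimension $d_E$, and conclude with $\mathcal P^k(F)\subset\mathcal P^k(E)$. You supply details the paper leaves implicit: the bound $k+1\le d_E\le d$, the invariance of hypotheses (1) and (2) under the bi-Lipschitz transfer, and the embedding $\A(d_E,k)\hookrightarrow\A(d,k)$.
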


\section{Preliminaries }
We will make use of the classic Marstrand theorems on projections and plane sections. These results were obtained by Marstrand \cite{Mar1} in the plane. Here we present the results in higher dimensions; for the projection theorems, see for example \cite[Theorem 6.1 and 6.2]{Falcbook}, and for the results on plane sections, see \cite[Theorem 10.10]{Mat1}.
\begin{thm}\label{mar1}
Let $E\subset\R^d$ be a Borel set. Then for $\gamma_{d,k}$ almost all $V\in \G(d,k)$,
\begin{equation}\label{mareq1}
\hdim (\Proj_V(E)) = \hdim E\ \text{if}\ \hdim E \leq k,
\end{equation}
and 
\begin{equation}\label{mareq12}
\mathcal L^{k}(\Proj_V(E))>0\ \text{if}\ \hdim E > k.
\end{equation}
\end{thm}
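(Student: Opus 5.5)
The plan is the standard potential-theoretic (energy) proof. It combines Frostman's lemma with averaging over the Grassmannian. In both parts the bound $\hdim(\Proj_V(E))\leq \min\{\hdim E,k\}$ is automatic, because $\Proj_V$ is Lipschitz and $V$ has dimension $k$, so only lower bounds need proof.

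\textbf{Part (\ref{mareq1}).} Assume $\hdim E\leq k$ and fix $0<t<\hdim E$.
\begin{enumerate}
\item Since $E$ is Borel, Frostman's lemma gives a compact $K\subset E$ and a nonzero Borel measure $\mu$ on $K$ with finite $t$-energy,
\[
I_t(\mu)=\iint |x-y|^{-t}\,d\mu(x)\,d\mu(y)<\infty .
\]
\item Let $\mu_V=(\Proj_V)_\#\mu$, which is supported on $\Proj_V(E)$. By Fubini,
\[
\int_{\G(d,k)} I_t(\mu_V)\,d\gamma_{d,k}(V)=\iint\Big(\int_{\G(d,k)}|\Proj_V(x-y)|^{-t}\,d\gamma_{d,k}(V)\Big)\,d\mu(x)\,d\mu(y).
\]
\item The key geometric estimate is
\[
\int_{\G(d,k)}|\Proj_V(z)|^{-t}\,d\gamma_{d,k}(V)\leq C(d,k,t)\,|z|^{-t}\quad\text{for } 0<t<k .
\]
By rotation invariance and homogeneity it suffices to take $|z|=1$. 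One then uses $\gamma_{d,k}\{V:|\Proj_V z|\leq \delta\}\lesssim \delta^{k}$: the set of such $V$ lies within $\delta$ of the subspaces orthogonal to $z$, which have codimension $k$ in the relevant sense. The layer-cake formula then gives $\int |\Proj_V z|^{-t}\,d\gamma \lesssim 1+\int_0^1 \delta^{k-t-1}\,d\delta<\infty$.
\item Hence $I_t(\mu_V)<\infty$ for $\gamma_{d,k}$-a.e. $V$, so $\hdim \Proj_V(E)\geq t$ for those $V$.
\item Take $t_n\uparrow \hdim E$ and discard the countable union of exceptional null sets. This gives (\ref{mareq1}).
\end{enumerate}

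\textbf{Part (\ref{mareq12}).} Assume $\hdim E>k$.
\begin{enumerate}
\item Choose $\mu$ as above with $I_k(\mu)<\infty$.
\item Use the Fourier expression $I_k(\mu)=c\int_{\R^d}|\widehat\mu(\xi)|^2|\xi|^{k-d}\,d\xi$.
\item For $\xi\in V$ we have $\widehat{\mu_V}(\xi)=\widehat\mu(\xi)$.
\item Integrating over $V$ and then over $\G(d,k)$, the polar-coordinate identity
\[
\int_{\G(d,k)}\int_V f(\xi)\,d\mathcal H^k(\xi)\,d\gamma_{d,k}(V)=c'\int_{\R^d} f(\xi)|\xi|^{k-d}\,d\xi
\]
(both sides are rotation invariant and homogeneous of the same degree) yields
\[
\int_{\G(d,k)}\|\widehat{\mu_V}\|_{L^2(V)}^2\,d\gamma_{d,k}(V)=c''\,I_k(\mu)<\infty .
\]
\item So for $\gamma_{d,k}$-a.e. $V$, $\mu_V\in L^2(V)$ by Plancherel. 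In particular $\mu_V$ is absolutely continuous and nonzero.
\item Therefore $\mathcal L^k(\Proj_V(E))\geq \mathcal L^k(\operatorname{supp}\mu_V)>0$.
\end{enumerate}

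\textbf{Main obstacle.} The main technical step is the Grassmannian estimate in Part (\ref{mareq1}) (and the analogous Grassmannian polar-coordinate identity in Part (\ref{mareq12})). I would prove the small-ball bound $\gamma_{d,k}\{V:|\Proj_V z|\leq\delta\}\lesssim\delta^k$ by identifying $\gamma_{d,k}$ with the image of Haar measure on $O(d)$. By invariance, $|\Proj_V z|$ for fixed unit $z$ has the same law as the length of the projection of a uniformly random unit vector onto a fixed $k$-plane. The event then asks that the first $k$ coordinates of a uniform point on $S^{d-1}$ lie in a $\delta$-ball, which has probability $\approx\delta^{k}$. Everything else is routine measure theory.
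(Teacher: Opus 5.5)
Your proposal is correct and is the standard potential-theoretic proof: averaging $I_t(\mu_V)$ over $\G(d,k)$ using the small-ball bound $\gamma_{d,k}\{V:|\Proj_V z|\le\delta\}\lesssim\delta^k$ gives the dimension statement, and the Grassmannian polar-coordinate identity together with Plancherel gives positive $\mathcal L^k$ measure. The paper does not prove this theorem itself; it quotes it from Falconer's book (Theorems 6.1 and 6.2) and Mattila, where it is proved by exactly this energy/Fourier method, so your route coincides with the cited one.
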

\begin{thm}\label{mar2}
Let $d-k < s \leq d$ and let $E\subset\R^d$ be $\mathcal H^s$ measurable with $0<\mathcal H^s(E)<\infty$. Then for  $\gamma_{d,k}$ almost all $H\in \G(d,k)$, 
\begin{equation}\label{mareq4}
 \mathcal{H}^{s-(d-k)} (E\cap (H+u))<\infty,
\end{equation}
for $\mathcal{L}^{d-k}$ almost all $u\in H^{\perp}$.
Further, for some positive $\mathcal{L}^{d-k}$ measure family of $u\in H^{\perp}$,

\begin{equation}\label{mareq3}
\hdim (E\cap (H+u)) = s-(d-k).
\end{equation}

\end{thm}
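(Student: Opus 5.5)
The statement is the classical Marstrand slicing theorem in $\R^d$. I would prove it in two separate parts: an integral-geometric inequality that gives the upper bound (\ref{mareq4}), and a Frostman-measure/energy argument that gives the lower bound needed for (\ref{mareq3}).

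\emph{Upper bound.} First I would establish the Eilenberg-type inequality
\[
\int_{H^\perp}^{*}\mathcal H^{s-(d-k)}\big(E\cap(H+u)\big)\,d\mathcal L^{d-k}(u)\leq C\,\mathcal H^{s}(E),
\]
valid for \emph{every} $H\in\G(d,k)$. To prove it, cover $E$ by sets $U_i$ with $\sum_i \operatorname{diam}(U_i)^s\leq \mathcal H^s_\delta(E)+\varepsilon$. Each $U_i$ meets $H+u$ only for $u$ in a set of $\mathcal L^{d-k}$-measure at most $C\operatorname{diam}(U_i)^{d-k}$. Hence
\[
\int \mathcal H^{s-(d-k)}_\delta\big(E\cap(H+u)\big)\,du\leq \int\sum_i \operatorname{diam}(U_i)^{s-(d-k)}\mathbf 1_{\{U_i\cap(H+u)\neq\emptyset\}}\,du\leq C\sum_i\operatorname{diam}(U_i)^s .
\]
Letting $\delta\to0$ and applying Fatou's lemma gives the inequality. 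Since $\mathcal H^s(E)<\infty$, this yields (\ref{mareq4}) for a.e.\ $u$, and in particular $\hdim(E\cap(H+u))\leq s-(d-k)$ for a.e.\ $u$.

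\emph{Lower bound.} Fix $d-k<t<s$. Because $0<\mathcal H^s(E)$, Frostman's lemma gives a compact $K\subset E$ and a measure $\mu$ on $K$ with $\mu(B(x,r))\lesssim r^s$, so that $I_t(\mu)<\infty$. For $H\in\G(d,k)$, let $\mu_H=(\Proj_{H^\perp})_\#\mu$. Since $t>d-k$, Theorem \ref{mar1} (in its energy form) shows that $\mu_H\ll\mathcal L^{d-k}$ for a.e.\ $H$. I would then define the sliced measures $\mu_{H,u}$ supported on $K\cap(H+u)$ as weak limits of $\mathcal L^{d-k}(B(u,\delta))^{-1}\,\mu|_{\Proj_{H^\perp}^{-1}B(u,\delta)}$. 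These satisfy
\[
\int\|\mu_{H,u}\|\,du=\mu(K)>0 .
\]
The key estimate, with $\sigma=t-(d-k)$, is
\[
\int_{\G(d,k)}\int_{H^\perp} I_{\sigma}(\mu_{H,u})\,du\,d\gamma_{d,k}(H)\lesssim I_t(\mu)<\infty .
\]
I would prove it by expressing $I_\sigma(\mu_{H,u})$ through the $\delta$-approximations and applying Fatou's lemma. This reduces the estimate to the geometric fact
\[
\gamma_{d,k}\{H:\ |\Proj_{H^\perp}(x-y)|\leq\delta\}\lesssim \left(\delta/|x-y|\right)^{d-k}.
\]
Inserting this into the double integral turns the kernel $|x-y|^{-\sigma}$ into $|x-y|^{-\sigma-(d-k)}=|x-y|^{-t}$.

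\emph{Conclusion and main obstacle.} For a.e.\ $H$, the estimate gives $I_\sigma(\mu_{H,u})<\infty$ for a.e.\ $u$. The set of $u$ with $\|\mu_{H,u}\|>0$ has positive measure, by the total-mass identity above. For such $u$, the energy bound gives $\hdim(E\cap(H+u))\geq t-(d-k)$. To combine this with the upper bound, I would take $t_n\uparrow s$ along a countable sequence and fix one $\mu$. The positive-mass set of $u$ comes from $\mu$ alone, so it is the same for every $n$, and the a.e.\ statements in $H$ and $u$ intersect over countably many $n$. This gives equality (\ref{mareq3}) on a positive-measure set of $u$.

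The main obstacle is making the slicing estimate rigorous: proving that the sliced measures exist, and justifying the passage of limits inside the energy integral (Fatou's lemma together with weak convergence). The Grassmannian measure estimate itself is a routine computation using the invariance of $\gamma_{d,k}$.
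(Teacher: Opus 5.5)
Your proposal is correct. The paper does not prove this statement; it quotes it from Mattila (Theorem 10.10 of \emph{Geometry of Sets and Measures in Euclidean Spaces}), and your argument is essentially the standard proof given there: Eilenberg's inequality $\int^*\mathcal H^{s-(d-k)}(E\cap(H+u))\,du\lesssim\mathcal H^s(E)$ for the upper bound, and a Frostman measure with sliced measures $\mu_{H,u}$ and the averaged energy estimate $\int\int I_{t-(d-k)}(\mu_{H,u})\,du\,d\gamma_{d,k}(H)\lesssim I_t(\mu)$ along $t_n\uparrow s$ for the lower bound.
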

We used $d-k< s$ for convenience. Falconer proved the following sharp estimates for the set of exceptional projections in \cite{Falc1982}.

\begin{thm}\label{exmar2}
Let $d-k < s \leq d$ and let $E\subset\R^d$ be $\mathcal H^s$ measurable with $0<\mathcal H^s(E)<\infty$. Then there is a Borel set $\Theta:=\Theta(s,d,d-k)\subset \G(d,d-k)$ such that 
\begin{equation}\label{exsetbound}
\hdim \Theta\leq(d-k)-s+k(d-k)=(k+1)(d-k)-s
\end{equation}
and for all $V\in \G(d,d-k)\setminus \Theta,$
\begin{equation}\label{mareq5}
\mathcal{H}^{d-k}(\Proj_V(E))>0.
\end{equation}
\end{thm}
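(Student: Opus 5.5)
The plan is the classical Fourier-analytic energy argument of Kaufman and Mattila, adapted to $m$-planes with $m:=d-k$, arguing by contradiction. The target bound reads $\hdim\Theta\le m(d-m)+m-s$, since $(k+1)(d-k)-s=m(d-m)+m-s$.

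\textbf{Setup.} Since $0<\mathcal H^s(E)<\infty$, Frostman's lemma gives a compactly supported measure $\mu$ on $E$ with $\mu(B(x,r))\lesssim r^{s}$. Hence the energy $I_t(\mu)=c\int|\widehat\mu(\xi)|^2|\xi|^{t-d}\,d\xi$ is finite for every $t<s$. Let $\Theta$ be the Borel set of $V\in\G(d,m)$ for which $\Proj_V\mu$ is not absolutely continuous with an $L^2$ density with respect to $\mathcal H^m$ on $V$. (Borel measurability comes from writing the density condition via $\liminf_{r\to 0}$ of averages of continuous functionals.) Off $\Theta$ we get $\mathcal H^m(\Proj_V E)>0$, which is (\ref{mareq5}).

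\textbf{Contradiction hypothesis.} Suppose $\hdim\Theta>m(d-m)+m-s$. Pick $\sigma$ with $\hdim\Theta>\sigma>m(d-m)+m-s$. Frostman's lemma on $\G(d,m)$ (a smooth manifold of dimension $m(d-m)$) gives a probability measure $\nu$ on a compact subset of $\Theta$ with $\nu(B(V,\delta))\lesssim\delta^\sigma$.

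\textbf{Reduction.} Since $\widehat{\Proj_V\mu}(\eta)=\widehat\mu(\eta)$ for $\eta\in V$, Plancherel reduces the task to showing
\[
\int\int_V|\widehat\mu(\eta)|^2\,d\mathcal H^m(\eta)\,d\nu(V)<\infty .
\]
This forces $\Proj_V\mu\in L^2$ for $\nu$-a.e. $V$, contradicting $\nu(\Theta)=1$.

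\textbf{Key geometric estimate.} For a unit vector $e$ and $\delta>0$,
\[
\nu\{V:\dist(e,V)<\delta\}\lesssim\delta^{\sigma-(m-1)(d-m)}.
\]
The reason is that the $m$-planes containing the line $\R e$ form a copy of $\G(d-1,m-1)$, of dimension $(m-1)(d-m)$. Its $\delta$-neighbourhood is covered by $\approx\delta^{-(m-1)(d-m)}$ balls of radius $\delta$, each of $\nu$-mass $\lesssim\delta^\sigma$. Making this covering uniform in $e$ is the step I expect to be the main technical obstacle; it requires a little care with local coordinates on the Grassmannian.

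\textbf{Computing the weight.} Decompose frequency space dyadically, $|\eta|\sim 2^j$, and smooth out at scale $1$ by replacing $|\widehat\mu|^2$ with $|\widehat{\mu*\phi}|^2$-type local averages. This is legitimate because $\widehat\mu$ is essentially constant on unit balls, since $\mu$ has compact support. Then
\[
\int\int_V|\widehat\mu|^2\,d\mathcal H^m\,d\nu\lesssim\int|\widehat\mu(\xi)|^2 w(\xi)\,d\xi,
\]
where $w(\xi)\approx|\xi|^{-(d-m)}\,\nu\{V:\dist(\xi/|\xi|,V)\lesssim|\xi|^{-1}\}$. Here the factor $|\xi|^{-(d-m)}$ accounts for the unit thickening of $V$ in its $d-m$ normal directions. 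By the key estimate,
\[
w(\xi)\lesssim|\xi|^{-(d-m)}\,|\xi|^{-\sigma+(m-1)(d-m)}=|\xi|^{t-d},\qquad t:=d-\sigma+(m-2)(d-m)+\ldots
\]
Carrying out the exponent bookkeeping carefully, the integral is bounded by $I_t(\mu)$ with
\[
t=m(d-m)+m-\sigma\cdot 1\ \text{adjusted so that}\ t<s\iff\sigma>m(d-m)+m-s .
\]
This is exactly the choice of $\sigma$, so the integral is finite and the contradiction is reached. I would double-check this exponent matching against the known endpoint: for $s\to d$ the bound should reduce to $\hdim\Theta\le (m-1)(d-m)+\ldots$, consistent with the bound stated in (\ref{exsetbound}). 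If the dyadic smoothing proves delicate, the fallback is to run the same argument on the spherical averages $\int_{S^{d-1}}|\widehat\mu(re)|^2\,d\nu_*(e)$, where $\nu_*$ is the pushforward of $\nu$ to directions.
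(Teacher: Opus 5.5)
Your architecture is the right one: a Frostman measure $\nu$ on $\Theta$, the reduction to $\iint_V|\widehat\mu|^2\,d\mathcal H^m\,d\nu(V)<\infty$, and the key estimate $\nu\{V:\dist(e,V)<\delta\}\lesssim\delta^{\sigma-(m-1)(d-m)}$. Uniformity in $e$ is immediate, since $O(d)$ acts transitively on $S^{d-1}$ and isometrically on $\G(d,m)$. This is the standard Fourier proof of Falconer's bound; the paper gives no proof and simply quotes it from Falconer.

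\textbf{The gap.} The one step that fixes the exponent is wrong as written, and is then replaced by an assertion. Your weight $w(\xi)\approx|\xi|^{-(d-m)}\nu\{V:\dist(\xi/|\xi|,V)\lesssim|\xi|^{-1}\}$ carries a spurious factor $|\xi|^{-(d-m)}$. Taken literally it gives $t=d-\sigma+(m-2)(d-m)$, and $t<s$ iff $\sigma>m(d-m)+m-s-(d-m)$. That is better than the theorem by $d-m$, and false. Take $d=2$, $m=1$, $\nu=\delta_{V_0}$ (so $\sigma=0$): you would get $t=1<s$. Then every projection of every $s$-Frostman measure with $s\in(1,2)$ would be in $L^2$. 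This fails for $\mu=\mathcal L^1|_{[0,1]}\times\mu_C$, with $C$ a Cantor set of dimension $s-1$, projected to the vertical axis. Your own endpoint check at $s=d$ would have caught it: the literal formula gives $(m-2)(d-m)$, not $(m-1)(d-m)$. The phrase ``adjusted so that $t<s\iff\sigma>m(d-m)+m-s$'' is just the conclusion restated.

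\textbf{The fix.} Thickening $V$ by a unit ball in its normal directions replaces $\mathcal H^m|_V$ by $\mathcal L^d|_{V+B(0,1)}$ up to a constant \emph{independent of $\xi$}. So with $\lambda:=\int\mathcal H^m|_V\,d\nu(V)$ and $|\xi|\ge 1$,
\[
\lambda(B(\xi,1))\lesssim\nu\{V:\dist(\xi,V)<1\}=\nu\{V:\dist(\xi/|\xi|,V)<|\xi|^{-1}\}\lesssim|\xi|^{(m-1)(d-m)-\sigma}=|\xi|^{u-d},
\]
where $u:=d+(m-1)(d-m)-\sigma=m(d-m)+m-\sigma$. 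Since $\sigma<m(d-m)$, we get $u>m>0$, and $u<s$ exactly when $\sigma>m(d-m)+m-s$.

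In polar coordinates, the radial Jacobian loss $r^{-(d-m)}$ you inserted is cancelled by a gain $r^{d-m}$ in the angular density of $\nu_*:=\int\mathcal H^{m-1}|_{V\cap S^{d-1}}\,d\nu(V)$. Indeed $\nu_*(B(e,\delta))\lesssim\delta^{m-1}\nu\{V:\dist(e,V)<\delta\}$, while $\mathcal H^{d-1}(S^{d-1}\cap B(e,\delta))\approx\delta^{d-1}$.

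\textbf{Smaller points.} Make the smoothing concrete. Choose $\phi\in C_c^\infty$ with $\phi=1$ on $\operatorname{spt}\mu$, so that $|\widehat\mu|^2\le\|\widehat\phi\|_1\,|\widehat\phi|*|\widehat\mu|^2$. Rapid decay of $\widehat\phi$, with the ball bound above and the trivial bound $\lambda(B(\xi,1))\lesssim 1$, gives $|\widehat\phi|*\lambda(\eta)\lesssim(1+|\eta|)^{u-d}$. Hence $\iint_V|\widehat\mu|^2\,d\mathcal H^m\,d\nu\lesssim\mu(\R^d)^2+I_u(\mu)<\infty$. Also, $\Theta=\{V:\int_V|\widehat\mu|^2\,d\mathcal H^m=\infty\}$ is Borel, because $V\mapsto\int_V|\widehat\mu|^2\,d\mathcal H^m$ is lower semicontinuous.
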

We state the following Hausdorff measure estimate on vertical slices, \cite[Proposition 2.3]{Mat3}.
\begin{prop}\label{union of vertical fibers}
Let $A\subset\mathbb{R}^{n+m}$, and set $A_x=\{y\in \mathbb{R}^n :(x,y)\in A\}$ for $x\in\mathbb{R}^m$. Then for any non-negative numbers $s$ and $t$
\begin{equation*}
    \int^*\mathcal{H}^t(A_x)\ d\mathcal{H}^s(x)\leq C(m,n,s,t)\mathcal{H}^{s+t}(A),\end{equation*}
where $\int^*$ denotes the upper integral. Moreover, it was proved in \cite{Mar2}, that
\begin{equation*}
\mathcal{H}^{t+s}(A)\geq c\mathcal{H}^{s}(\{x\in\mathbb{R}^m:  \mathcal{H}^{t}(A_x)>0\}),   
\end{equation*}
where $c$ is a positive absolute constant. In particular, if 
\begin{equation}\label{inequality essinf}
\hdim(\{x\in\mathbb{R}^m: \hdim(A_x)\geq t\})\geq s, \ \text{ then }\ \hdim(A)\geq s+t.    
\end{equation}
\end{prop}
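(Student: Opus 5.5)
The proposition has three parts: the integral inequality, Marstrand's lower bound, and the dimensional consequence (\ref{inequality essinf}). I would prove the integral inequality by Federer's covering argument (as in \cite{Mat3}). I would then deduce a qualitative form of the second statement, which is all that (\ref{inequality essinf}) needs, so Marstrand's quantitative constant $c$ from \cite{Mar2} can be quoted rather than reproved.

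\emph{Step 1 (integral inequality).} We may assume $\mathcal H^{s+t}(A)<\infty$. Fix $\delta>0$ and cover $A$ by sets $E_i$ with $d_i=\operatorname{diam}E_i<\delta$ and $\sum_i d_i^{s+t}\le \mathcal H^{s+t}_\delta(A)+\delta$. Put each $E_i$ inside a product $P_i\times Q_i$, where $P_i\subset\R^m$ and $Q_i\subset\R^n$ are closed sets of diameter at most $d_i$ (for instance the closures of the coordinate projections of $E_i$). For every $x$ we then have $A_x\subset\bigcup_{i:\,x\in P_i}Q_i$, and hence
\[
\mathcal H^t_\delta(A_x)\le \sum_i d_i^t\,\chi_{P_i}(x).
\]
We would like to integrate this pointwise bound against $\mathcal H^s$. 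The difficulty is that $\mathcal H^s(P_i)$ need not be $\lesssim d_i^s$ (for example when $s<m$ and $P_i$ is a ball). I would follow Federer (2.10.25): integrate against the size-$\delta$ approximating outer measures, for which $\mathcal H^s_\delta(P_i)\le C d_i^s$, so the upper integral of the right-hand side is at most $C\sum_i d_i^{s+t}$. One then lets $\delta\to0$, using that $\mathcal H^t_\delta(A_x)\uparrow\mathcal H^t(A_x)$ and a monotone/Fatou argument for upper integrals. This passage from $\mathcal H^s_\delta$ to $\mathcal H^s$ is the step I expect to be the main obstacle. The cleanest fix is to pass through the upper Lebesgue-type integral with respect to a Carath\'eodory construction that dominates $\mathcal H^s$ up to a constant, which is exactly where $C(m,n,s,t)$ arises.

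\emph{Step 2 (qualitative lower bound).} Let $B=\{x:\mathcal H^t(A_x)>0\}=\bigcup_j B_j$, where $B_j=\{x:\mathcal H^t(A_x)>1/j\}$. By Step 1,
\[
j^{-1}\mathcal H^s(B_j)\le \int^*\mathcal H^t(A_x)\,d\mathcal H^s(x)\le C\,\mathcal H^{s+t}(A).
\]
Hence $\mathcal H^{s+t}(A)=0$ forces $\mathcal H^s(B_j)=0$ for every $j$, and so $\mathcal H^s(B)=0$. This qualitative statement is what feeds into (\ref{inequality essinf}). The sharper inequality with an absolute constant $c$ I would simply cite from \cite{Mar2}.

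\emph{Step 3 (dimension consequence).} Suppose $\hdim\{x:\hdim A_x\ge t\}\ge s$, and fix $s'<s$ and $t'<t$ (if $s=0$ or $t=0$, take $s'=0$ or $t'=0$ and the same argument works). Then $\mathcal H^{s'}(\{x:\hdim A_x\ge t\})=\infty>0$. For each such $x$ we have $\mathcal H^{t'}(A_x)=\infty>0$, so $\{x:\mathcal H^{t'}(A_x)>0\}$ has positive $\mathcal H^{s'}$ measure. By Step 2 applied with $(s',t')$, this gives $\mathcal H^{s'+t'}(A)>0$, and therefore $\hdim A\ge s'+t'$. Letting $s'\uparrow s$ and $t'\uparrow t$ yields $\hdim A\ge s+t$. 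Measurability is not an issue anywhere, because only upper integrals and outer measures are used.
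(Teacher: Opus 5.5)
The paper gives no argument here. It quotes \cite{Mat3} for the integral inequality (a special case of Federer's 2.10.25, applied to the projection onto $\R^m$) and \cite{Mar2} for the second inequality. Your Steps 2 and 3 correctly derive (\ref{inequality essinf}) from Step 1. In fact they use only its Chebyshev consequence $\lambda\,\mathcal H^s(\{x:\mathcal H^t(A_x)>\lambda\})\le C\,\mathcal H^{s+t}(A)$.

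\textbf{The gap is in Step 1, which carries all the content.} The pointwise bound $\mathcal H^t_\delta(A_x)\le\sum_i d_i^t\chi_{P_i}(x)$ is fine, but the next step is not.
\begin{itemize}
\item $\mathcal H^s_\delta$ is not additive on Borel sets; for $0<s<m$ not even half-spaces are $\mathcal H^s_\delta$-measurable. So an upper integral defined through measurable majorants is useless.
\item For a Choquet-type integral $\int_0^\infty\mathcal H^s_\delta(\{\phi>r\})\,dr$, the bound $\int\sum_i d_i^t\chi_{P_i}\lesssim\sum_i d_i^t\,\mathcal H^s_\delta(P_i)$ is a strong-subadditivity statement. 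For two sets it reads $\mathcal H^s_\delta(P\cup Q)+\mathcal H^s_\delta(P\cap Q)\lesssim\mathcal H^s_\delta(P)+\mathcal H^s_\delta(Q)$, which does not follow from countable subadditivity.
\end{itemize}
That is the real obstacle, and your ``cleanest fix'' does not supply an argument for it.

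By contrast, the passage $\delta\to0$ that you single out is routine once you work with level sets. For $\delta<\delta_0$ one has $\{x:\mathcal H^t_{\delta_0}(A_x)>r\}\subset\{x:\mathcal H^t_{\delta}(A_x)>r\}$, so an estimate at scale $\delta$ bounds the $C\delta$-content of the $\delta_0$-level sets. Let $\delta\to0$, since contents increase setwise to $\mathcal H^s$. Then let $\delta_0\to0$, since $\mathcal H^s$ is Borel regular and hence continuous along increasing sequences of arbitrary sets.

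\textbf{How to close the gap for what you need.} Use a weighted (Vitali-type) covering lemma in $\R^m$: if $\chi_B\le\sum_i c_i\chi_{P_i}$ with $\operatorname{diam}P_i\le\delta$, then $\mathcal H^s_{C\delta}(B)\le C\sum_i c_i(\operatorname{diam}P_i)^s$. Apply it to $B=\{x:\mathcal H^t_\delta(A_x)>\lambda\}$ with weights $d_i^t/\lambda$, then take the double limit above. This gives the Chebyshev bound, and hence your Steps 2--3.

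The full integral inequality needs more. One route: dominate $\mathcal H^t(A_x)$ by the Borel function $\liminf_j\sum_i d_{ij}^t\chi_{P_{ij}}(x)$. Then apply Fatou's lemma against $\mathcal H^s|_{F'}$, where $F'\subset F$ with $\mathcal H^s(F)<\infty$ is chosen via the upper density theorem so that $\mathcal H^s(F'\cap P)\lesssim(\operatorname{diam}P)^s$ for all small $P$, and then exhaust. Alternatively, simply cite Federer, as the paper does.

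\textbf{Do not plan to cite the inequality attributed to \cite{Mar2}.} With an absolute constant it is false for $t>0$. Take $m=n=1$, $s=t=1$ and $A=[0,1]\times[0,\varepsilon]$. Then $\mathcal H^2(A)\approx\varepsilon$, while $\mathcal H^1(\{x:\mathcal H^1(A_x)>0\})=1$. The correct substitute is your qualitative Step 2 (or the Chebyshev bound), which is all that (\ref{inequality essinf}) requires.

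A minor point: for $s=0$ or $t=0$, taking $s'=0$ or $t'=0$ needs the relevant sets to be nonempty, i.e.\ the convention $\hdim\emptyset=-\infty$. Otherwise (\ref{inequality essinf}) fails trivially, e.g.\ for $A=\emptyset$.
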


\section{Lemma \ref{lemma:improvement k-planes} and proof of Theorem \ref{main}}\label{Proof of main theorem}
We begin this section by presenting the observation that allows us to apply the result from Theorem \ref{thm P^k planes BOM} to obtain an unconditional version that holds true within a reduced range for the dimension of the $m$-planes considered in $\mathcal P^{m}(E)$.
\begin{defn}
Given a $s$-Frostman measure $\mu$ supported on $E$, we define the \textbf{irreducible dimension} of $\mu$, to be the number 
\begin{equation}\label{irreducible dimension of E}
    d_{\mu}:=\min\{m \in \mathbb{N}\ : \exists \ V\in\mathcal{A}(d,m) \text{ such that $\mu$ is irreducible in $V$} \}.
\end{equation}
Note that $d_{\mu}=d$ if $\mu$ is irreducible in $\R^d$, otherwise $1\leq d_\mu\leq d-1$.
\end{defn}

\begin{lma}\label{lemma:improvement k-planes}
    Let $E\subset \R^d$ be a Borel set supporting an $s$-Frostman probability measure $\mu$ with $k<s$. Then $k+1\leq d_\mu\leq d$, and for all $1 \leq m\leq d_\mu-1$,
    \begin{equation}
        \hdim \mathcal P^{m}(E) \ge (m+1)\min\{s,d_{\mu}-m\}
    \end{equation}
    regardless of the choice of the $s$-Frostman measure $\mu$ supported on $E$.
\end{lma}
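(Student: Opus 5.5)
Let $V\in\mathcal A(d,d_\mu)$ be an affine subspace of minimal dimension such that $\mu$ is irreducible in $V$. The plan is to identify $V$ with $\R^{d_\mu}$ by an isometry, apply Theorem \ref{thm P^k planes BOM} to the restricted measure there, and then push the resulting planes back into $\R^d$.

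First I need to clarify what "irreducible in $V$" means in the definition of $d_\mu$. I read it as: $\mu$ is supported on $V$ (that is, $\mu(\R^d\setminus V)=0$), and $\mu(V')=0$ for every proper affine subspace $V'\subset V$. With this reading, the minimum in (\ref{irreducible dimension of E}) is attained. Indeed, the family of affine subspaces carrying full $\mu$-mass is nonempty, since it contains $\R^d$. Take a member $V$ of minimal dimension. Any proper affine subspace $V'\subset V$ with $\mu(V')>0$ would have $\mu(V')<1$ by minimality. I expect the main obstacle to be this point: the definition speaks of irreducibility, not of full mass, and one could otherwise worry about a pathological choice.

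If the definition is read more loosely, so that $V$ need only carry positive mass, I would instead normalize $\mu|_V/\mu(V)$. This is still an $s$-Frostman measure, with constant enlarged by $1/\mu(V)$, and it is supported on $E\cap V$. The remaining steps work unchanged with this normalized restriction, so I will only treat the strict reading below.

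The lower bound $d_\mu\ge k+1$ is the easy part. Since $\mu$ is $s$-Frostman with $\mu(V)>0$, we get $\hdim(\operatorname{supp}\mu\cap V)\ge s>k$. An affine plane of dimension $d_\mu$ has Hausdorff dimension $d_\mu$, so $d_\mu\ge s>k$. As $d_\mu$ is an integer, $d_\mu\ge k+1$. The upper bound $d_\mu\le d$ is trivial.

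Next, fix an isometry $\iota:V\to\R^{d_\mu}$ and let $\nu=\iota_*(\mu|_V)$ (normalized if necessary). Isometries preserve the Frostman condition, so $\nu$ is an $s$-Frostman probability measure on $\R^{d_\mu}$. It is supported on the Borel set $\iota(E\cap V)$. It is irreducible in $\R^{d_\mu}$ by the choice of $V$, because proper affine subspaces of $\R^{d_\mu}$ correspond exactly to proper affine subspaces of $V$. One technicality: Theorem \ref{thm P^k planes BOM} presumably needs $s\le d_\mu$ for a Frostman measure to exist. This holds here, since it follows automatically from the existence of $\nu$.

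Applying Theorem \ref{thm P^k planes BOM} in $\R^{d_\mu}$ gives, for every $1\le m\le d_\mu-1$,
\[
\hdim \mathcal P^m(\iota(E\cap V))\ge (m+1)\min\{s,d_\mu-m\}.
\]
It remains to transfer this bound back to $\R^d$. The map $W\mapsto \iota^{-1}(W)$ sends $\mathcal A(d_\mu,m)$ into $\mathcal A(d,m)$. It is a smooth isometric embedding of the Grassmannian-type manifolds, after choosing compatible metrics. Hence it is bi-Lipschitz onto its image and preserves Hausdorff dimension. Its image of $\mathcal P^m(\iota(E\cap V))$ is exactly $\mathcal P^m(E\cap V)$, which is contained in $\mathcal P^m(E)$. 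Monotonicity of Hausdorff dimension then gives the claimed inequality.

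Finally, nothing in this argument depended on which Frostman measure $\mu$ we started with, only on $d_\mu$. So the estimate holds for every choice of $s$-Frostman measure $\mu$ supported on $E$, as the statement requires.
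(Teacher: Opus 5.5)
Your overall route is the paper's: pass to an affine subspace $V$ of dimension $d_\mu$ carrying an irreducible piece of $\mu$, identify $V$ with $\R^{d_\mu}$, apply Theorem~\ref{thm P^k planes BOM} there, and pull back using $\mathcal P^m(E\cap V)\subset\mathcal P^m(E)$. Your Frostman bound $d_\mu\ge s>k$, the normalization of $\mu|_V$, and the bi-Lipschitz transfer from $\mathcal A(d_\mu,m)$ to $\mathcal A(d,m)$ are all fine, and are more careful than the paper, which leaves the last two implicit.

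The gap is in the step you yourself flagged, which you resolve incorrectly. Under your ``strict'' reading (full mass on $V$), take $V$ of minimal dimension among full-mass affine subspaces. Minimality only gives $\mu(V')<1$ for proper $V'\subset V$, not $\mu(V')=0$, so you have not produced an irreducible measure. In fact, under that reading the minimum defining $d_\mu$ need not exist. In $\R^4$, let $\mu$ put half of normalized area measure on $[0,1]^2\times\{0\}^2$ and half on $\{0\}^2\times[0,1]^2$. This is a $2$-Frostman measure, so it is admissible with $k=1$. The only affine subspace of full mass is $\R^4$, and $\mu$ is not irreducible there, since it gives mass $1/2$ to a $2$-plane. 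So the branch you chose to develop rests on a false existence claim, and the assertion $k+1\le d_\mu\le d$ is not even meaningful there.

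The fix is to use the positive-mass reading, which is the one the paper uses. Let $V$ be an affine subspace of minimal dimension with $\mu(V)>0$; it exists because $\mu(\R^d)=1$ and dimensions are integers. Any proper affine $V'\subset V$ has strictly smaller dimension, so $\mu(V')=0$ by minimality. Hence $\mu|_V/\mu(V)$ is an $s$-Frostman probability measure (constant enlarged by $1/\mu(V)$), supported on $E\cap V$ and irreducible in $V$, and $d_\mu=\dim V$. With this choice of $V$, the rest of your argument goes through verbatim and coincides with the paper's proof.
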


\begin{proof}[Proof of Lemma \ref{lemma:improvement k-planes}]
Let $E$ be a Borel set such that $\hdim(E)>s>k$ and $\mu$ be a $s$-$Frostman$ measure supported on $E$. If $\mu$ is irreducible as in Definition \ref{defn: irreducibility of a measure}, then $d_\mu=d$, and we are back in the situation of Theorem \ref{thm P^k planes BOM}. 

On the other hand, if $\mu$ is not irreducible, by definition there exists a proper affine plane such that $\mu(E\cap V)>0$ and it can be chosen to be of minimal dimension, that is $V\in\mathcal{A}(d,d_{\mu})$. Furthermore, $k+1\leq d_{\mu}\leq  d-1,$ and since $\mu$ is $s$-$Frostman$ with $k<s$, then $\mu$ could not assign positive measure to $E\cap V$ if $\dim(V)\leq k<s$.  With the minimality of $V$ in place, set $E':=E\cap V$. This means that $\mu|_{E'}$ is both positive on $E'$ and irreducible on $V$. After rotations we can regard $V=\R^{d_{\mu}}\times \{0\}^{d-d\mu}$ and then it is possible to apply Theorem \ref{thm P^k planes BOM} on $E'\subset\R^{d_{\mu}}\times \{0\}^{d-d\mu}$ where the ambient dimension is $d_{\mu}$. This finishes the lemma.
\end{proof}
The strategy of the proof of Theorem \ref{main}  can be summarized as follows. The $k$-simplex with base $\{x_0, \cdots, x_{k}\}\subset E$ and height $y\in E$, has volume given by
\begin{equation*}
 \Vol_{k+1}(x_0, \cdots, x_{k},y)=\frac{1}{k+1} \dist(y, W) \Vol_{k}(x_0, \cdots, x_{k})
\end{equation*}
where $W=\aff\{x_0, \cdots, x_{k}\}$ is the affine $k$-plane spanned by the collection of $(k+1)$-points forming the base. The point $y$ providing the height (and the volume in this case) comes from $y\in E\setminus W$, and the corresponding height is $\dist(y, W)=|\Proj_{W^\perp}(y)|$. If the set $E$ is not concentrated in few parallel affine planes $V=W+p$ with $p\in W^\perp$, then $\Proj_{W^\perp}(E)$ will be a large set and the same would be true for $\Vol_{k+1}^{(x_0,\ldots,x_{k})}(E)\sim|\Proj_{W^\perp}(E)|$, in turn, we would have many different volumes. For this argument to work, we require to show that the dimension of the collection of all non-parallel affine planes that can hold a proper base of $(k+1)$ affinely independent points in $E$ surpasses that of the set of exceptional projections $\Theta(s,d,d-k)$.
\begin{figure}[H]
    \centering
    \includegraphics[scale=0.26]{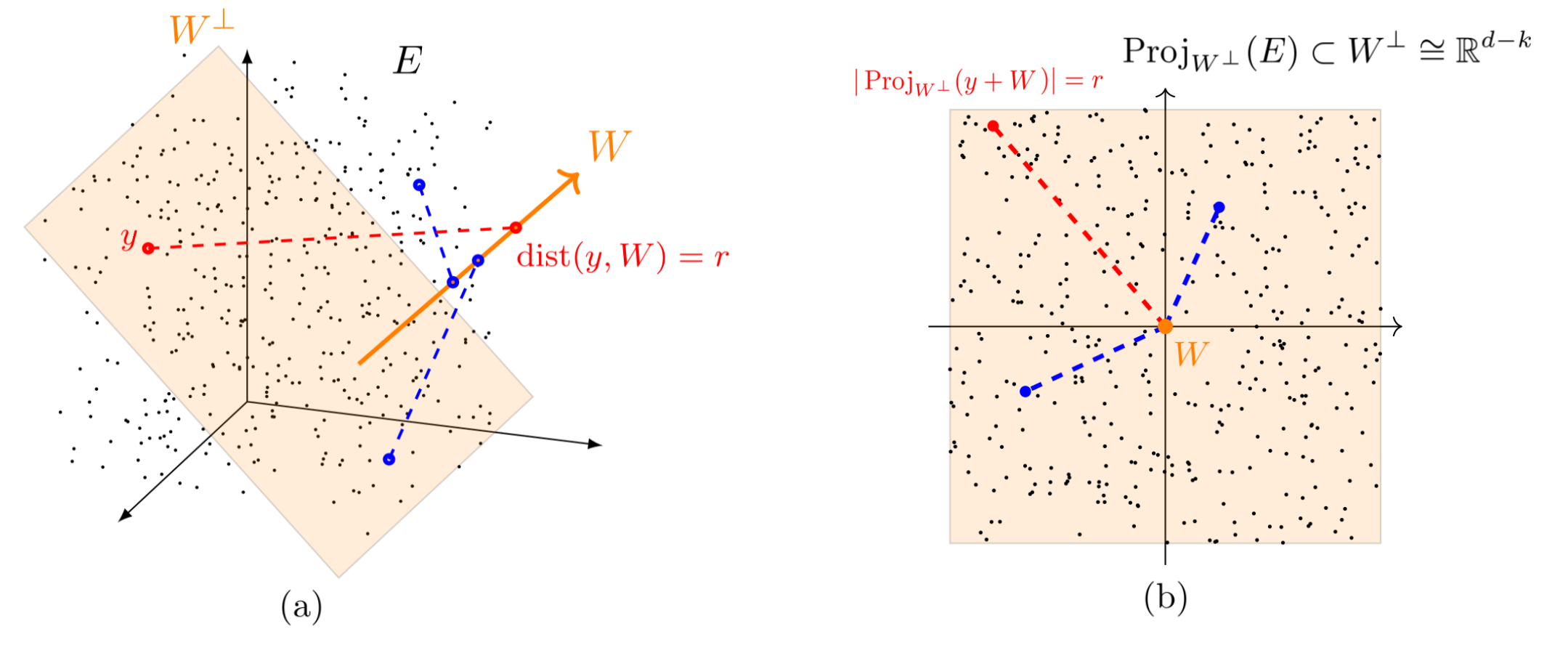}
    \caption{In Figure (a) we represent how the size of the set $|\Proj_{W\perp}(E)|$ dictates the amount of different volumes of $k$-simplices with a base contained in $W$. In Figure (b) the origin is $\mathcal{O}=\Proj_{W^{\perp}}(W)$, and any  parallel affine plane  gets mapped to a point with distance $d(\mathcal{O},\Proj_{W^\perp}(y+W))=r$.}
    \label{fig:placeholder}
\end{figure}

\begin{proof}[Proof of Theorem \ref{main}]
Let $E$ be a Borel set such that $\hdim(E)>s>k$ and $\mu$ be a $s$-$Frostman$ measure supported on $E$. By the definition (\ref{irreducible dimension of E}), there exists at least one affine plane satisfying (\ref{irreducible dimension of E}) with minimal dimension, pick one of them, $V\in \A(d,d_\mu)$.  Set $E':=E\cap V$, this means that $\mu|_{E'}$ is both positive on $E'$ and irreducible on $V$. We identify $V\cong \R^{d_\mu}$, and from now on we will regard $E'\subset\R^{d_\mu}$ as a Borel set with $\hdim(E')>s>\max\{k,d_\mu-k\}$.

Let $W=\aff\{x_0, \cdots, x_{k}\}\in \A(d_{\mu},k)$ be the affine $k$-plane spanned by a collection of $(k+1)$-points $\{x_0, \cdots, x_{k}\}\subset E'$, this means that $W\in \mathcal P^{k}(E')$, in addition, we consider $p\in \R^{d_\mu}$ to be the point such that $W_0:=W-p\in \mathcal{G}(d_\mu,k)$. We will be interested in studying the set of $(k+1)$-volumes of simplices with base $W\in \mathcal P^{k}(E')$. As mentioned above, the idea is to 
observe that the $k$-simplex with base $\{x_0, \cdots, x_{k}\}$ and height $y$, has volume given by
\begin{equation}
 \Vol_{k+1}(x_0, \cdots, x_{k},y)=\frac{1}{k+1} \dist(y, W) \Vol_{k}(x_0, \cdots, x_{k}).
\end{equation}
The point $y\in E\setminus W$, and the corresponding height is $\dist(y, W)=|\Proj_{W_0^{\perp}}(y-p)|$.   The first step is to estimate the size of $\{|\Proj_{W_0^{\perp}}(y-p)|: y\in E' \}.$
One straightforward way to conclude that $\mathcal{L}^1(\{|\Proj_{W_0^{\perp}}(y-p)|: y\in E'\} )>0$ is by proving the existence of at least one $W_0=W-p\in \mathcal{G}(d_\mu,k)$ such that 
\begin{equation}\label{positive d-k Lebesgue measure}
\mathcal{L}^{d_\mu-k}(\Proj_{W_0^{\perp}}(E'-p))>0,
\end{equation}
this is because around any point of full Lebesgue density $z\in \Proj_{W_0^{\perp}}(E'-p)$ there exists some $0<\rho<r$ such that $B_r(z)\cap (\Proj_{W_0^{\perp}}(E'-p))$ has positive $\mathcal{L}^{d_\mu-k}$ measure and then $$\{|\Proj_{W_0^{\perp}}(y-p)|: y\in E'\}\cap (|z|-\rho,|z|+\rho)$$ has positive $\mathcal{L}^{1}$ measure. In order to prove (\ref{positive d-k Lebesgue measure}), we make use of the condition $\hdim(E')>s> d_\mu-k$, which is necessary by Theorem \ref{mar1}, because in this case the orthogonal projection is onto $W_0^{\perp}\in\G(d_\mu,d_\mu-k)$. We also need to verify that the collection 
\begin{equation}\label{PRojection onto the complement subset}
    \{W_0^{\perp}: W_0=W-p\in \G(d_\mu,k) \text{ for } W\in \mathcal{P}^k(E')\}\subset \G(d_\mu,d_\mu-k) ,
\end{equation}
is not fully contained in the set of exceptional projections, $\Theta:=\Theta(d_\mu,d_\mu-k)$, as defined in Theorem \ref{exmar2}. We first note that the subset from (\ref{PRojection onto the complement subset}) has the same Hausdorff dimension as $\{W_0:=W-p\in \G(d_\mu,k): W\in \mathcal{P}
^k(E')\}$, this follows by noticing that under a natural metric on the Grassmannian, for example, the one that embeds the metric space $\G(d_\mu,k)$ inside the real $d\times d$ square matrices, $M_d(\R)$ , by assigning $V\mapsto P_{V}$, where $P_V$ is the matrix projection onto $V$ with the basis independent metric given by
\begin{equation}\label{metric on the Grassmanian}
d(W,V)=||P_W-P_V||_{2},    
\end{equation}
where $|| \cdot||_{2}$ is the $L_2$ norm in $M_d(\R)$; we observe that $||P_W-P_V||_{2}=||P_{W^\perp}-P_{V^\perp}||_{2}=d(W^\perp,V^\perp)$,  meaning that the two subsets are isometric, thus, the Hausdorff dimension of (\ref{PRojection onto the complement subset}) is the same as $$\hdim\left(\{W_0:=W-p\in \G(d_\mu,k): W\in \mathcal{P}
^k(E')\}\right)=\hdim\left(\mathcal{P}
^k(E')/\sim\right),$$ where $\sim$ denotes the equivalence relation $W\sim V$ if $W_0=V_0$, equivalently if $W=v_0+V$ for some $v_0\in W_0^\perp$. In other words, it represents the equivalence classes of parallel affine planes inside $\mathcal{P}^k(E')$ choosing the representative to be $W_0\in \G(d_\mu,k)$. This means that we can ensure that the subset (\ref{PRojection onto the complement subset}) will have some elements outside of the set of exceptional projections if we achieve 
\begin{equation}
\hdim\left(\mathcal{P}
^k(E')/\sim\right)>\hdim \Theta.
\end{equation}
 By Lemma \ref{lemma:improvement k-planes} and $s>d_\mu-k$ we know that $\hdim(\mathcal{P}
^k(E'))\geq (k+1)(d_\mu-k)$. On the other hand, for each of the equivalence classes,  we have 
\begin{equation*}
\{V\sim W\in\mathcal{P}
^k(E') : W=W_0+p\in \mathcal{P}
^k(E') \}\subset\{V\sim W\in\A(d_\mu,k) : W=W_0+p\in \mathcal{P}
^k(E')\}
\end{equation*}
 for any $W=W_0+p\in\mathcal{P}
^k(E')$ fixed. Moreover, it is easy to see that
\begin{equation*}
 \hdim(\{V\sim W\in\A(d_\mu,k) : W=W_0+p\in \mathcal{P}
^k(E')\})= \hdim(W_0^\perp)=d_\mu-k,
\end{equation*}
because all the affine translations $p$ are parametrized by $p\in W_0^\perp$; with this we conclude 
\begin{equation}\label{dimension of P^k quotient withy translations}
 \hdim\left(\mathcal{P}^k(E')/\sim\right)\geq  \hdim\left(\mathcal{P}^k(E')\right) -(d_\mu-k)\geq k(d_\mu-k).
\end{equation}
Since $\left(\mathcal{P}^k(E')/\sim\right) \subseteq \G(d_{\mu},k)$ and   $\dim\left(\G(d_{\mu},k))=k(d_\mu-k\right)$, indeed (\ref{dimension of P^k quotient withy translations}) is an equality throughout. Finally, by the estimates in Theorem \ref{exmar2}, we have $\hdim\Theta\leq(k+1)(d_\mu-k)-s$, putting all this together we obtain the dimensional threshold needed for $\hdim\left(\mathcal{P}^k(E')/\sim\right)>\hdim \Theta$ 
\begin{equation}\label{ineq s}
   k(d_\mu-k)>(k+1)(d_\mu-k)-s 
\end{equation}
or $d_\mu-k< s$, which is already been assumed.  This proves the existence of at least one $W^*\in\mathcal{P}^k(E')/\sim \ ,$ such that $(W_0^*)^{ \perp}\in\G(d_\mu,d_\mu-k)$ is outside of the set of exceptional projections, $\Theta$. Therefore,  $\mathcal{L}^{d_\mu-k}(\Proj_{(W_0^*)^{ \perp}}(E'-p))>0$ and by the argument described above, this concludes $\mathcal{L}^1(\{|\Proj_{(W_0^*)^{ \perp}}(y-p)|: y\in E'\} )>0$. Thus
\[
  \Vol_{k+1}(x_0, \cdots, x_{k},y)=\frac{1}{k+1} \dist(y, W^*) \Vol_{k}(x_0, \cdots, x_{k}),
\] 
and the fact that $\{\dist(y,W^*): y\in E'\}=\{|\Proj_{(W_0^*)^{ \perp}}(y-p)|: y\in E'\}$ shows that $\mathcal{L}^1\left(\Vol_{k+1}^{(x_0,\ldots,x_{k})}(E)\right)>0$  and it finishes the proof.
\end{proof}
\section{Fubini Property and Proof of Theorem \ref{k-affine span Fubini}}\label{Fubini section}
In this section we provide some context to the Fubini property and explain why it is a natural non-concentration assumption which in the context of Theorem \ref{k-affine span Fubini} is not particularly restrictive. We recall Definition \ref{Fubini definition} from H\'era, Keleti, and M\'ath\'e \cite{HKM}.
\begin{defn}
We say that a nonempty set $B\subset \R^m\times \R^n$  has the \textbf{Fubini Property}, if 
 \begin{equation}\label{Fubini property}
     \hdim(B)= \hdim(\pi_X(B))+\esssup^{\alpha}_{x\in \R^m}\hdim(\pi_X^{-1}(x)\cap B),
 \end{equation}
 where $\pi_X$ is the orthogonal projection onto $\R^m$ and $\alpha=\hdim(\pi_X(B))$.
\end{defn}
It was already mentioned in the introduction that the following inequality always holds true
 \begin{equation}\label{Fubini always true}
     \hdim(\pi_X(B))+\esssup^{\alpha}_{x\in \R^m}\hdim(\pi_X^{-1}(x)\cap B)\leq \hdim(B).
 \end{equation}
A classic example of a set $B\subset\R^{m+n}$ failing the Fubini property is given in the case where $B$ is the graph of a fractal space-filling curve, see for example, Figure \ref{fig:graph}.
 
\begin{example}\cite[Theorem 8.2]{Falcbook2}\label{Example graph} For every $1<s<2$, there exist a function $f_s:[0,1]\to \R$ whose graph $B=\{(x,f_s(x)):x\in[0,1]\}\subset\R^{1+1}$ satisfies $\hdim(B)=s$. In this case, $\pi_X(B)=[0,1]$ and $\pi_X^{-1}(x)\cap B$  is the singleton $(x,f_s(x))$ for all $x\in [0,1]$, therefore, the right-hand side of $(\ref{Fubini property})$ is $1+0$ while $\hdim(B)=s>1$.
\end{example}
\begin{figure}[H]
    \centering
    \includegraphics[scale=0.22]{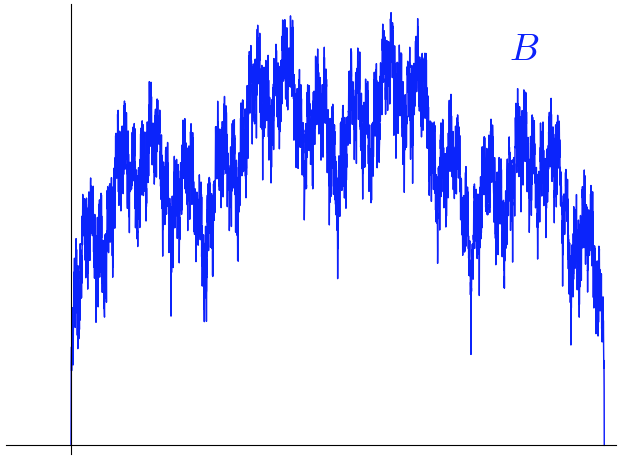}
    \caption{Graph of some $f_s$ with Hausdorff dimension $1<s<2$.}
    \label{fig:graph}
\end{figure}
The set $B$ from Figure \ref{fig:graph} does not satisfies the Fubini property along the vertical direction, but what about other directions? For example, the graph from Figure \ref{fig:graph} may satisfy a Fubini property along horizontal slices $\pi_Y^{-1}$; it would do it if $\displaystyle \esssup^{1}_{y_0\in \{0\}\times \R}\hdim(\pi_Y^{-1}(y_0)\cap B)=s-1$.  

An application of the Marstrand's Projection Theorem in higher dimensions proves that the set $B$ satisfies the Fubini property along almost every subspace direction.
 \begin{prop}\cite[Proposition 2.6]{HKM}
Any nonempty Borel set $B\subset\R^{m+n}$ has the
Fubini property along almost every $k$-dimensional
subspace; that is,
 \begin{equation}\label{Fubini property along V}
     \hdim(B)= \hdim(\pi_V(B))+\esssup^{\alpha}_{x\in \R^m}\hdim(\pi_V^{-1}(x)\cap B),
 \end{equation}
holds for $\gamma_{m+n,m}$-almost every $V \in \G(m+n,m)$, where $\alpha=\hdim(\pi_V(B))$ and $\pi_V:=\Proj_V$ is the orthogonal projection onto $V$ with $\gamma_{m+n,m}$ denoting the Haar measure on the Grassmannian. 
 \end{prop}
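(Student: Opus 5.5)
The plan is to derive the statement from Marstrand's projection theorem, Theorem \ref{mar1}, together with the slicing inequality of Proposition \ref{union of vertical fibers}, which supplies the general inequality (\ref{Fubini always true}). Since (\ref{Fubini always true}) always holds, only the reverse inequality
\[
\hdim(B)\leq \hdim(\pi_V(B))+\esssup^{\alpha}_{x}\hdim(\pi_V^{-1}(x)\cap B)
\]
needs proof, and only for $\gamma_{m+n,m}$-almost every $V$. Write $\sigma=\hdim(B)$ and split into two cases according to whether $\sigma\leq m$ or $\sigma>m$.

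\emph{Case $\sigma\leq m$.} By (\ref{mareq1}) applied with $k=m$, for almost every $V\in\G(m+n,m)$ we have $\hdim(\pi_V(B))=\sigma$. The esssup term is non-negative, so the right-hand side of (\ref{Fubini property along V}) is at least $\sigma$. Combined with (\ref{Fubini always true}), this gives equality. This case is immediate.

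\emph{Case $\sigma>m$.} By (\ref{mareq12}), for almost every $V$ the projection $\pi_V(B)$ has positive $\mathcal L^m$ measure. Hence $\alpha=m$ and $\mathcal H^m(\pi_V(B))>0$, so the esssup is computed with respect to $\mathcal L^m$ via (\ref{eqessup}). It then suffices to show that for almost every $V$ the fibres $B\cap(V^\perp+x)$ have dimension at least $\sigma-m-\varepsilon$ for a positive $\mathcal L^m$-measure set of $x$, for every $\varepsilon>0$. This is exactly the slicing statement (\ref{mareq3}) of Theorem \ref{mar2}, applied to planes of dimension $n=\dim V^\perp$ with $s$ close to $\sigma$; the hypothesis $s>(m+n)-n=m$ matches the case assumption. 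To apply Theorem \ref{mar2}, choose $m<s<\sigma$ and a compact $K\subset B$ with $0<\mathcal H^s(K)<\infty$, which exists since $B$ is Borel (Davies' theorem). For $\gamma$-a.e. $V^\perp\in\G(m+n,n)$, Theorem \ref{mar2} gives a positive-measure set of translates $u\in V$ with $\hdim(K\cap(V^\perp+u))=s-m$. The map $V\mapsto V^\perp$ preserves the Haar measure, so this holds for a.e. $V\in\G(m+n,m)$. Hence the esssup is at least $s-m$, and the right-hand side is at least $m+s-m=s$. Letting $s\uparrow\sigma$ along a countable sequence and intersecting the corresponding full-measure sets of $V$ gives the inequality for a.e. $V$.

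The main obstacle is the passage from $\mathcal H^s$-measurable sets of finite positive measure, as required in Theorem \ref{mar2}, to an arbitrary Borel $B$, together with the countable intersection over $s$. Both are handled by the compact-subset selection and by monotonicity of dimension of slices, since $K\subset B$ gives $\hdim(B\cap(V^\perp+u))\geq\hdim(K\cap(V^\perp+u))$. Care is also needed to identify $\pi_V^{-1}(x)$ with $V^\perp+x$ and the base measure on $V\cong\R^m$ with $\mathcal L^m$, after a rotation taking $V$ to $\R^m\times\{0\}$.
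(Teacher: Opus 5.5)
Your proof is correct. It splits at $\sigma=m$, uses (\ref{mareq1}) and (\ref{mareq12}) for the base, and applies Theorem \ref{mar2} to a compact $K\subset B$ with $0<\mathcal{H}^s(K)<\infty$, fixed before the exceptional set of $V$'s is removed, then intersects over countably many $s\uparrow\sigma$; together with (\ref{Fubini always true}) this is the standard Marstrand projection-plus-slicing argument behind \cite{HKM}. The paper gives no proof of its own, only a one-line attribution to Marstrand's theorems, so your write-up is that same approach filled in.
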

 \begin{figure}[H]
    \centering
    \includegraphics[scale=0.3]{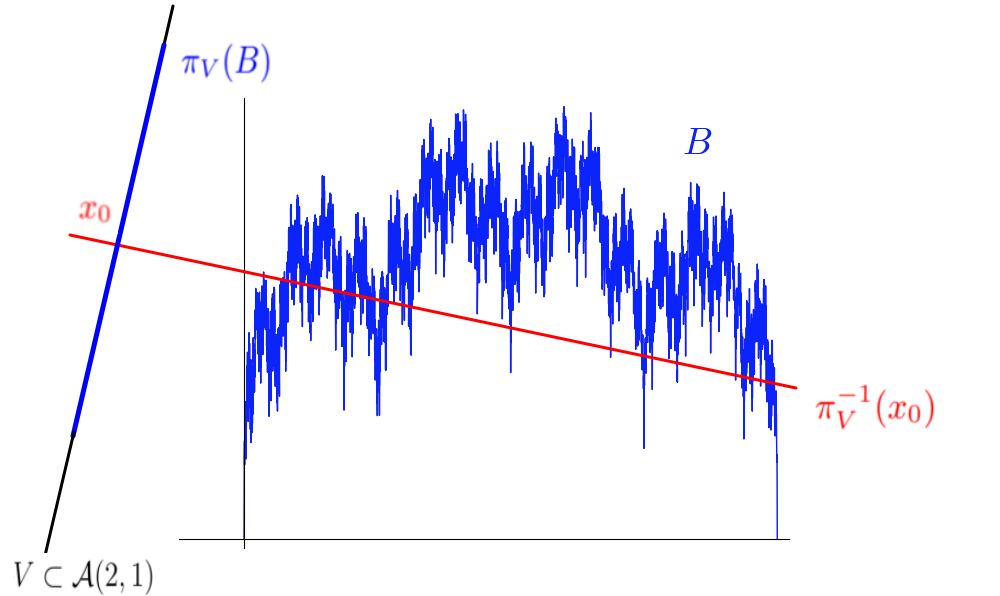}
    \caption{Fubini property along different directions.}
    \label{fig:graph 2}
\end{figure}

Notice that the first condition of Theorem \ref{k-affine span Fubini}, that demands the incidence set $(\ref{Incidence set S})$ to satisfy the Fubini property is equivalent to requiring the graph of $\psi_{k}|_{E^{k+1}\setminus D}$ to have the Fubini property along the horizontal direction $\pi_Y:=\Proj_{\{0\}^m\times\R^n}$, which is a plausible situation, at least for a graph of a function $f_s$ as the one from Figure \ref{fig:graph 2}. The situation also seems plausible for the graph of $\psi_{k}|_{E^{k+1}\setminus D}$ if it is the case that the fibers $\psi_{k}|_{E^{k+1}\setminus D}^{-1}(W)\cap \mathcal{S}$ are large for almost every $W\in \mathcal{P}^k(E)$, namely, if there are multiple many collections $\{x_0,\dots,x_{k}\}\subset(E\cap W)$ that span $W=\aff\{x_0,\dots,x_{k}\}$ and typically it happens that $\hdim(E\cap W)= s-(d-k)>0$. The next lemma shows why we imposed the Fubini property condition on the incidence set $\mathcal{S}$ rather than on the set $E$ itself.
\begin{lma}\label{fibers}
    Let $\psi: \mathcal{D}\subset\R^m \to \R^n$ to be a Lipschitz map into $\R^n$. Assume that the graph $$\Gamma:=\{(x,\psi(x)):x\in \mathcal{D}\}\subset \R^{m}\times\R^n$$
    satisfies the Fubini property along the horizontal direction  $\pi_Y$, as in $(\ref{Fubini property along V})$,  this is
    \begin{equation}\label{Lipschits graph 1}
  \hdim \Gamma=\hdim \pi_Y(\Gamma)+ \esssup^\alpha_{y\in \R^n}\hdim(\pi_Y^{-1}(y)\cap \Gamma),
    \end{equation}
 where $\alpha=\hdim(\pi_Y(\Gamma))$.  Then, the following Fubini-type property holds true
\begin{equation}\label{Lipschits graph 2}
  \hdim (\mathcal{D})=\hdim \psi(\mathcal{D})+ \esssup^\alpha_{y\in \R^n}\hdim(\psi^{-1}(y)\cap \mathcal{D}).
    \end{equation}   
\end{lma}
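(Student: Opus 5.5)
Since $\psi$ is Lipschitz, the projection $\pi_X|_\Gamma:\Gamma\to\mathcal D$, $(x,\psi(x))\mapsto x$, is a bi-Lipschitz bijection. Indeed, $|x-x'|\le |(x,\psi(x))-(x',\psi(x'))|\le (1+\mathrm{Lip}(\psi))|x-x'|$. The plan is to transfer each of the three terms of (\ref{Lipschits graph 1}) to the corresponding term of (\ref{Lipschits graph 2}) using only this bi-Lipschitz correspondence, since Hausdorff dimension is invariant under bi-Lipschitz maps. We then substitute term by term.

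The first step handles the left-hand side. From the bi-Lipschitz bound just noted, $\hdim\Gamma=\hdim\mathcal D$.

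The second step handles the projected term. Directly from the definition, $\pi_Y(\Gamma)=\{\psi(x):x\in\mathcal D\}=\psi(\mathcal D)$ as subsets of $\R^n$. Hence $\alpha=\hdim\pi_Y(\Gamma)=\hdim\psi(\mathcal D)$. This matters because the parameter $\alpha$ in the essential supremum, and the measure $\mathcal H^\alpha$ restricted to the base set, are literally the same in both formulas.

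The third step handles the fibres, and this is where the only real care is needed. For each $y\in\R^n$ we have
\[
\pi_Y^{-1}(y)\cap\Gamma=\{(x,y):x\in\mathcal D,\ \psi(x)=y\}=(\psi^{-1}(y)\cap\mathcal D)\times\{y\},
\]
which is an isometric copy of $\psi^{-1}(y)\cap\mathcal D$. Therefore $\hdim(\pi_Y^{-1}(y)\cap\Gamma)=\hdim(\psi^{-1}(y)\cap\mathcal D)$ for every $y$, not merely for almost every $y$. Consequently the two sets $\{y:\hdim(\pi_Y^{-1}(y)\cap\Gamma)>q\}$ and $\{y:\hdim(\psi^{-1}(y)\cap\mathcal D)>q\}$ coincide for every $q\ge0$. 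Their $\mathcal H^\alpha$-measures and their Hausdorff dimensions therefore agree. This gives equality of the essential suprema in the case $\mathcal H^\alpha(\psi(\mathcal D))>0$ via (\ref{eqessup}). In the null case the limiting definition (\ref{essnull}) involves the same sets with $\alpha$ replaced by $\alpha-\varepsilon$, so the essential suprema agree for each $\varepsilon$ and hence in the limit.

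Substituting these three identities into (\ref{Lipschits graph 1}) yields (\ref{Lipschits graph 2}). The main obstacle is checking that the essential supremum really is insensitive to the identification of fibres, including the degenerate case covered by (\ref{essnull}). This is dispatched by the pointwise equality of fibre dimensions established in the third step, since both definitions of $\esssup^\alpha$ depend only on the superlevel sets of the fibre-dimension function on $\R^n$ and on $\alpha$.
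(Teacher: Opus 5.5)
Your proposal is correct and takes essentially the same route as the paper: you show the graph map is bi-Lipschitz so $\hdim\Gamma=\hdim\mathcal D$, note $\pi_Y(\Gamma)=\psi(\mathcal D)$, and observe that each horizontal fibre $\pi_Y^{-1}(y)\cap\Gamma$ is an isometric copy of $\psi^{-1}(y)\cap\mathcal D$, so all three terms match. Your explicit check that the essential supremum depends only on the superlevel sets of the fibre-dimension function, in both the positive-measure case and the limiting null case, is a small addition that the paper leaves implicit.
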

\begin{proof}
Define the graph map $\Psi:\mathcal{D}\to \Gamma$ by $\Psi(x):=(x,\psi(x))$. Since $\psi$ is Lipschitz on its domain, then by the triangle inequality we see that for any $x,x'\in \mathcal{D}$,
\begin{equation*}
    |\Psi(x)-\Psi(x')|\leq |x-x'|+|\psi(x)-\psi(x')|\leq |x-x'|+C|x-x'|=(C+1)|x-x'|,
\end{equation*} 
which shows that $\Psi$ is Lipschitz as well. Moreover, $\Psi$ is invertible with inverse $\Psi^{-1}=\pi_X|_{\Psi(\mathcal{D})}$ which is also Lipschitz because it is an orthogonal projection. Then, the map $\Psi$ is bi-Lipschitz, and thus its action preserves the Hausdorff dimension of the domain $$\hdim(\mathcal{D})=\hdim(\Psi(\mathcal{D}))=\hdim(\Gamma),$$ where these are the left-hand side terms from the statement, i.e., (\ref{Lipschits graph 1}) and (\ref{Lipschits graph 2}). The two terms in the right-hand side follow directly by the definitions of $\pi_Y$ and $\psi$. We observe that $\pi_Y(\Gamma)=\psi(\mathcal{D})$ and for each fiber, fix $y\in \psi(\mathcal{D})\subset\R^n$ to get
\begin{equation*}
    \pi_Y^{-1}(y)\cap\Gamma=\{(x,y)\in \Gamma:x\in \psi^{-1}(y)\},
\end{equation*}
 the above is just a lifting by $y$ of the set $\psi^{-1}(y)\cap \mathcal{D}=\{x\in \mathcal{D}:\psi(x)=y\}$, thus, the Hausdorff dimension is preserved as well, meaning that $\hdim\bigl(\pi_Y^{-1}(y)\cap \Gamma\bigr)=\hdim (\psi^{-1}(y)\cap \mathcal{D})$ for each $y\in\psi(\mathcal{D})$. In this way we see that the two terms on the right of (\ref{Lipschits graph 1}) and (\ref{Lipschits graph 2}) also match.
\end{proof}

Regarding the second assumption from Theorem \ref{k-affine span Fubini}, remember that for each $W\in \mathcal P^{k} (E)$, we considered $W^{k+1}=W\times \dots \times W\subset\R^{d(k+1)}$ which is a $k(k+1)$-affine plane in $\R^{d(k+1)}$. We define the following collection
\begin{equation}\label{Subset W^k 2}
\mathcal{W}^{k+1}(E):=\{(W^{k+1}: W\in \mathcal P^{k} (E)\}\subset\A(d(k+1),k(k+1)).
\end{equation}
Observe that for generic slices of $E^{k+1}$ we can apply Marstrand's Theorem \ref{mar2} to obtain that for $\gamma_{d(k+1),k(k+1)}$ almost every $V\in\A(d(k+1),k(k+1))$
\begin{equation*}
 \hdim(E^{k+1}\cap V)\leq \max\left\{0,\hdim(E^{k+1})-(k+1)(d-k)\right\},  
\end{equation*}
where $\gamma_{d(k+1),k(k+1)}$ denotes the natural Haar measure induced on the affine Grassmannian, and the maximum depends entirely on the constant $\hdim(E^{k+1})$. Hence, we conclude that 
\begin{equation}\label{Slicing E^{k+1} 2}
    \esssup^\alpha_{V\in\A(d(k+1),k(k+1))} \hdim(E^{k+1}\cap V) = \max\left\{0,\hdim(E^{k+1})-(k+1)(d-k)\right\},
\end{equation}
where $\alpha=\hdim(\A(d(k+1),k(k+1)))$. Nevertheless, the difficulty lies in the fact that (\ref{Slicing E^{k+1} 2}) gives no information about the value  
\begin{equation}
     \esssup^\beta_{W^{k+1}\in\mathcal{W}^{k+1}(E)} \hdim(E^{k+1}\cap W^{k+1})
\end{equation}
because $\mathcal{W}^{k+1}(E)$ is a $\gamma_{d(k+1),k(k+1)}$ null set, since $\beta=\hdim(\mathcal{W}^{k+1}(E))\leq(k+1)(d-k)<\alpha$.
Regarding the second assumption of Theorem \ref{k-affine span Fubini}, what we are essentially asking for is that the sections $E^{k+1}\cap W^{k+1}$ are not exceptionally large in comparison to the generic size in the sense of the Haar measure on $\A(d(k+1),k(k+1))$. After this discussion on the two assumptions we can begin the proof of the Theorem.
\begin{proof}[Proof of Theorem \ref{k-affine span Fubini}]
Remember the function \begin{align*}\label{map Fubini}
\psi_k: &E^{k+1}\subset \R^{(k+1)d} \xrightarrow{\hspace*{0.6cm}} \mathcal{A}(d,k) \\
&(x_0,\dots,x_k) \longmapsto \aff\{x_0,\dots,x_k\}, 
\end{align*}
that sends a $(k+1)$-tuple $\boldsymbol{x}=(x_0,\dots,x_k)$ to their affine span $\aff\{\boldsymbol{x}\}$. The domain of $\psi_k$ is $E^{k+1}\setminus D$, where $D:=\{\boldsymbol{x}'\in E^{k+1}: \dim(\aff\{\boldsymbol{x}'\})<k\}$ is the subset of degenerate spans. First, we observe that $D$ is a lower dimensional subset because for any $\boldsymbol{x}'\in D$ and after reordering we have  $x_k'\in\aff\{x_0',\dots,x_{k-1}'\}$, then $$\hdim(D)\leq \hdim(E^k)+(k-1)<\hdim(E^k)+s\leq\hdim(E^{k+1}),$$
because the first $k$ vectors are free on $E$ and $k-1\geq\hdim(\aff\{x_0', \cdots, x_{k-1}'\})$. Then we will focus on $\psi_{k}|_{E^{k+1}\setminus D}$, which we note is a Lipschitz map on its domain under the natural metric on $\A(d,k)$ (\ref{metric on the Grassmanian}) (the metric is independent of the base). Consider the incidence set \begin{equation}\label{Incidence set S 2}
\mathcal{S}:=\{\left(\psi_k(\boldsymbol{x}),\boldsymbol{x}\right): \boldsymbol{x}\in E^{k+1}\setminus D\},    
\end{equation}
where it is assumed that $\mathcal{S}$ satisfies the Fubini property (\ref{Fubini property}), which is equivalent to the graph of $\psi_{k}|_{E^{k+1}\setminus D}$ satisfying the Fubini property along the horizontal direction $\pi_Y$. Thus, Lemma \ref{fibers} applies and by (\ref{Lipschits graph 2}) we get
\begin{equation*}
  \hdim (E^{k+1}\setminus D)=\hdim \psi_{k}|_{E^{k+1}\setminus D}(E)+ \esssup^\beta_{W\in \A(d,k)}\hdim(\psi_{k}|_{E^{k+1}\setminus D}^{-1}(W)\cap (E^{k+1}\setminus D)),
 \end{equation*}  
with $\beta=\hdim(\mathcal{P}^k(E))$, by using $\psi_{k}|_{E^{k+1}\setminus D}(E)=\mathcal{P}^k(E)$ and $D\subsetneq E^{k+1}$ lower dimensional, we can rewrite
\begin{equation*}\
  \hdim (E^{k+1})=\hdim(\mathcal{P}^k(E))+ \esssup^\beta_{W\in \mathcal{P}^k(E)}\hdim(\psi_{k}^{-1}(W)\cap (E^{k+1}\setminus D))
 \end{equation*}  
or
\begin{equation}\label{Lemma: Fubini applied}
  \hdim(\mathcal{P}^k(E))=\hdim (E^{k+1})- \esssup^\beta_{W\in \mathcal{P}^k(E)}\hdim(\psi_{k}^{-1}(W)\cap (E^{k+1}\setminus D)).
 \end{equation}   
 Notice that for each $W\in \mathcal P^{k} (E)$ we have $$\psi_{k}^{-1}(W)\cap (E^{k+1}\setminus D)\subset(W\cap E)^{k+1}= W^{k+1}\cap E^{k+1},$$
 so, pointwise in $W$ we have $\hdim\left(\psi_{k}^{-1}(W)\cap (E^{k+1}\setminus D)\right)\leq \hdim\left(W^{k+1}\cap E^{k+1}\right)$ and thus 
 \begin{equation*}
    \esssup^\beta_{W\in \mathcal{P}^k(E)}\hdim\left(\psi_{k}^{-1}(W)\cap (E^{k+1}\setminus D)\right)\leq \esssup^\beta_{W\in \mathcal{P}^k(E)} \hdim\left(W^{k+1}\cap E^{k+1}\right).
\end{equation*}
Here we use the second assumption that says that $\displaystyle \esssup^\beta_{W\in \mathcal{P}^k(E)} \hdim\left(W^{k+1}\cap E^{k+1}\right)$ preserves the expected size of a generic section given by (\ref{Slicing E^{k+1} 2}), that is
\begin{equation*}
    \esssup^\beta_{W\in \mathcal{P}^k(E)} \hdim\left(W^{k+1}\cap E^{k+1}\right) \leq \max\left\{0,\hdim(E^{k+1})-(k+1)(d-k)\right\}.
\end{equation*}
Putting the two inequalities together into (\ref{Lemma: Fubini applied}), we get
\begin{align}\label{Main P^k inequality}
  \hdim(\mathcal{P}^k(E))&\geq \hdim (E^{k+1})-\max\left\{0,\hdim(E^{k+1})-(k+1)(d-k)\right\}\\ \nonumber
 &=\begin{cases} 
  \hdim (E^{k+1})   &   \text{if }\  \hdim(E^{k+1})\leq(k+1)(d-k) \\
  (k+1)(d-k)  & \text{if }\ \hdim(E^{k+1})>(k+1)(d-k)\ .
\end{cases}
\end{align}
Finally, since $\hdim (E^{k+1})\geq (k+1)\hdim(E)>(k+1)s$, the equation (\ref{Main P^k inequality}) is equivalent to $\hdim(\mathcal{P}^k(E))\geq (k+1)\min\{s,d-k\}$ finishing the proof.
\end{proof}

Observe that the conditions imposed in Theorem \ref{k-affine span Fubini} are well adjusted for a set $E$ with $d_E=d$. In the case when the set is essentially concentrated inside some affine subspace we would have $d_E<d$. Thus, Corollary \ref{corollary: effective dimension} follows naturally after choosing $V\in\A(d,d_E)$ of minimal dimension satisfying $(\ref{effective dimension of E})$ and restricting the argument of the proof of Theorem \ref{k-affine span Fubini} to the set $F=E\cap V$, by noticing that $\hdim(F)=\hdim(E)$ and after a rotation, if needed, identifying $F\subset\R^{d_E}\times\{0\}^{d-d_E}\cong \R^{d_E}$. Using this corollary, we can also draw conclusions about volumes of simplices, even thought it is not an unconditional result as Theorem \ref{main}.
\begin{cor}\label{Volumes with Fubini}
Let $E\subset \R^d$ be a Borel set with $\hdim(E)>s>\max\{k,d_E-k \}$, 
and given $V\in\A(d,d_E)$ of minimal dimension satisfying $(\ref{effective dimension of E})$, let $F=E\cap V$ and assume the following.
\begin{enumerate}
    \item The incidence set $\mathcal{S}$ for $\psi_{k}|_{F^{k+1}\setminus D}$ defined by $(\ref{Incidence set S})$ satisfies the \textbf{Fubini property} $(\ref{Fubini property intro})$.

        \item The inequality 
        \begin{equation}
        \esssup^\beta_{W^{k+1}\in\mathcal{W}^{k+1}(F)} \hdim(F^{k+1}\cap W^{k+1})\leq\max\left\{0,\hdim(F^{k+1})-(k+1)(d_E-k)\right\} 
        \end{equation}
         holds true, where $\beta=\hdim(\mathcal P^{k} (F))$ and $\mathcal{W}^{k+1}(F)$  is given by $(\ref{Subset W^k})$.
\end{enumerate}
Fix $k\geq 1$ and then $d\geq d_E\geq k+1$. Then there exist $x_{0},\ldots,x_{k}\in E$ such that the set
  \[
    \Vol_{k+1}^{(x_0,\ldots,x_{k})}(E) :=\left\{ \Vol_{k+1}(x_0,\ldots,x_k,y): y\in E \right\}
  \]
has positive Lebesgue measure. 
\end{cor}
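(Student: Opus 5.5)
The plan is to reproduce the proof of Theorem \ref{main} almost verbatim. The only change is the source of the lower bound on $\hdim \mathcal P^k(F)$: instead of Lemma \ref{lemma:improvement k-planes}, we use Corollary \ref{corollary: effective dimension}.

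\textbf{Step 1 (reduction).} Fix $V\in\A(d,d_E)$ of minimal dimension as in (\ref{effective dimension of E}) and set $F=E\cap V$. By definition $\hdim(E\setminus V)<\hdim(E)$, so $\hdim(F)=\hdim(E)>s>\max\{k,d_E-k\}$. After a rigid motion we identify $V\cong\R^{d_E}$ and regard $F\subset\R^{d_E}$. Choose $\mathcal H^s$-measurable $F'\subset F$ with $0<\mathcal H^s(F')<\infty$, which is possible by Frostman/Davies. We keep the base points in $F$, and apply the exceptional-projection theorem to $F'$.

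\textbf{Step 2 (many non-parallel spans).} Hypotheses (1) and (2) are exactly those of Corollary \ref{corollary: effective dimension}. It gives
\[
\hdim\mathcal P^k(F)\geq (k+1)\min\{s,d_E-k\}=(k+1)(d_E-k),
\]
where the equality uses $s>d_E-k$. Now pass to the quotient by parallelism, $\mathcal P^k(F)/\sim\;\subset\G(d_E,k)$. Each class is parametrized by translations in $W_0^\perp$, a $(d_E-k)$-dimensional family. Hence the same inequality (\ref{dimension of P^k quotient withy translations}) used in the proof of Theorem \ref{main} yields $\hdim(\mathcal P^k(F)/\sim)\ge k(d_E-k)$. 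This step deserves care, since it implicitly uses a Fubini/Marstrand-type upper bound (Proposition \ref{union of vertical fibers}). I will argue it exactly as in the proof of Theorem \ref{main}: cover $\mathcal P^k(F)$ by the preimage of its quotient under the locally Lipschitz map $\A(d_E,k)\to\G(d_E,k)$, whose fibers have dimension $d_E-k$. The map $W_0\mapsto W_0^\perp$ is an isometry for the metric (\ref{metric on the Grassmanian}), so the set of orthogonal directions $\{W_0^\perp\}\subset\G(d_E,d_E-k)$ also has dimension at least $k(d_E-k)$.

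\textbf{Step 3 (avoid exceptions).} Apply Theorem \ref{exmar2} in $\R^{d_E}$ to $F'$. The exceptional set satisfies $\hdim\Theta\le (k+1)(d_E-k)-s<k(d_E-k)$ precisely because $s>d_E-k$. Therefore some $W^*=\aff\{x_0,\dots,x_k\}$ with $x_i\in F\subset E$ affinely independent has $(W_0^*)^\perp\notin\Theta$. Consequently $\mathcal L^{d_E-k}(\Proj_{(W^*_0)^\perp}(F'-p))>0$. By the density-point argument in the proof of Theorem \ref{main}, $\{\dist(y,W^*):y\in F'\}$ has positive $\mathcal L^1$ measure. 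Finally,
\[
\Vol_{k+1}(x_0,\dots,x_k,y)=\tfrac{1}{k+1}\dist(y,W^*)\Vol_k(x_0,\dots,x_k)
\]
with $\Vol_k(x_0,\dots,x_k)>0$, so $\Vol^{(x_0,\dots,x_k)}_{k+1}(E)\supset\Vol^{(x_0,\dots,x_k)}_{k+1}(F')$ has positive Lebesgue measure.

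The main obstacle is Step 2's passage from $\hdim\mathcal P^k(F)$ to the dimension of the set of directions, together with verifying that the corollary's hypotheses really apply within $\R^{d_E}$. Everything else is routine once that bound exceeds $\hdim\Theta$.
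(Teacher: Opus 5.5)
Your proposal is correct and follows the paper's own proof. The steps match: restrict to $F=E\cap V\subset\R^{d_E}$, get $\hdim\mathcal P^k(F)\ge (k+1)(d_E-k)$ from Corollary~\ref{corollary: effective dimension}, pass to parallel classes so that $k(d_E-k)>(k+1)(d_E-k)-s\ge\hdim\Theta$ by Theorem~\ref{exmar2}, and rerun the projection/distance argument of Theorem~\ref{main}. Your choice of $F'\subset F$ with $0<\mathcal H^s(F')<\infty$ supplies the hypothesis Theorem~\ref{exmar2} actually requires, which the paper passes over.

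One small correction to Step 2. The upper bound $\hdim\mathcal P^k(F)\le\hdim(\mathcal P^k(F)/\sim)+(d_E-k)$ does not come from Proposition~\ref{union of vertical fibers}, which only gives lower bounds. It comes from the local product structure $\A(d_E,k)\approx\G(d_E,k)\times\R^{d_E-k}$ together with $\hdim(Q\times\R^{m})=\hdim Q+m$. The covering argument you describe is exactly this, so nothing is missing.
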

\begin{proof}
By the choice of $V\in\A(d,d_E)$ satisfying $(\ref{effective dimension of E})$, and setting $F=E\cap V$ we get $\hdim(F)=\hdim(E)>s>\max\{k,d_E-k \}$. Then, Corollary (\ref{corollary: effective dimension}) gives $\hdim(\mathcal{P}^k(F))=(k+1)(d_E-k)$, and the rest of the proof follows the argument of Theorem \ref{main} after realizing $F\subset\R^{d_E}\times\{0\}^{d-d_E}$, where $\hdim\left(\mathcal{P}^k(F)/\sim\right)>\hdim \Theta(s,d_E,d_E-k)$ when $d_E-k< s$.
\end{proof}
Lastly, we would like to remark that although the concepts of minimal dimension for irreducibility of a measure $\mu$ supported on $E$, $d_\mu$, and the effective dimension $d_E$ of $E$ are closely related, they are different in general, see the following example:
\begin{example}
Consider $E\subset\R^3$ with $\hdim(E)=s=1$ to be a set contained in three affine linearly independent line segments, $L_i$, where $E$ intersected with the first two segments equals the segments, while in the third, $\dim_H(E\cap L_3)=t<s$. Note that $V=\aff\{L_1,L_2\}$ satisfies $(\ref{effective dimension of E})$ with minimal dimension and so $d_E=\dim(V)=2$.

On the other hand, consider the measure $\mu=\mathcal{H}^1_{E}$, denoting the $1$-dimensional Hausdorff measure restricted to $E$,  then $\mu(E\cap L_1)>0$ and $\mu(E\cap L_2)>0$, whereas  $\mu(E\cap L_3)=0$. This means  the measure $\mu|_{E\cap V}$ is not irreducible on the $V$ above because $\mu(V')>0$ for $V'=\aff(L_1)\subset V$ of dimension $1$; however, $\mu|_{E\cap V'}$ is irreducible  on $V'$, so $d_\mu=1$. This example shows that for the measure $\mu$, its minimal dimension for irreducibility does not equal $d_E$ in general.
\end{example}
\begin{figure}[H]
    \centering
    \includegraphics[scale=0.17]{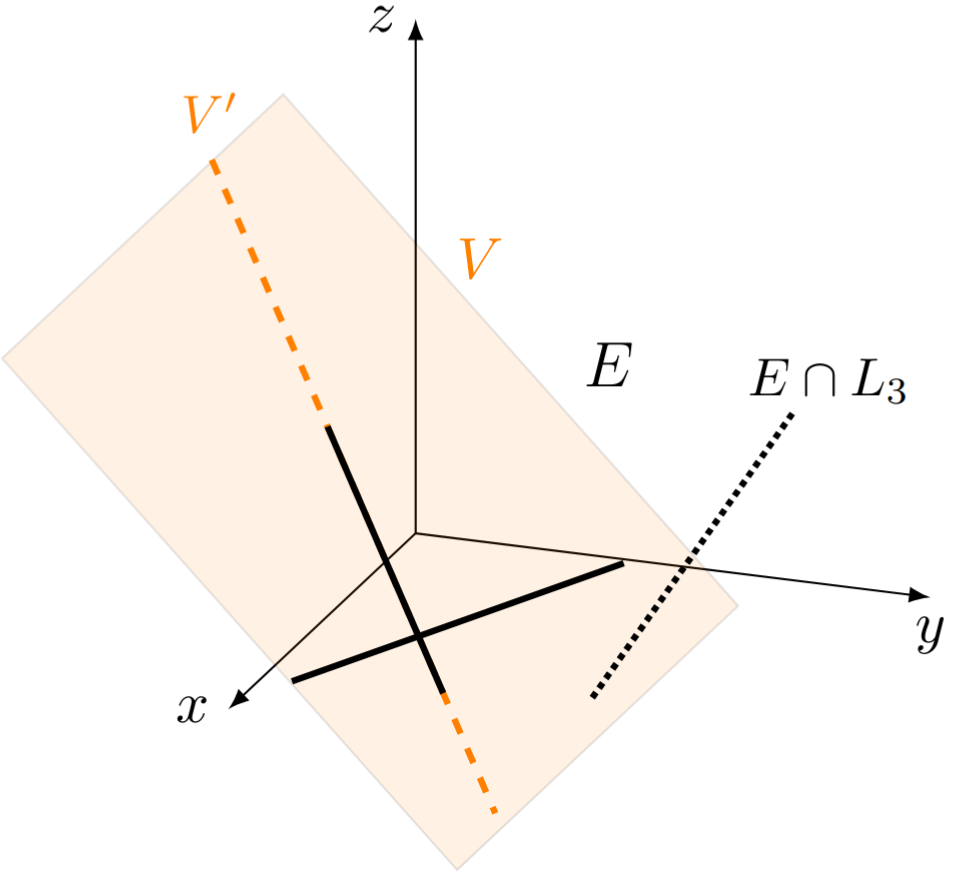}
    \caption{Set $E\subset \R^3$ with $d_E=2$, but any $1$-$Frostman$ measure supported on $E$ intersection a $2$-plane is not irreducible.}
    \label{fig:d_mu vs d_E 2}
\end{figure}

\end{document}